\documentclass[reqno,11pt]{article} 
\usepackage[margin=1in]{geometry}  
\usepackage{amsmath, amssymb}
\usepackage{amsthm} 
\allowdisplaybreaks
%
\newcommand{\Mod}[1]{\ (\textup{mod}\ #1)}
%
%
\theoremstyle{plain} 
\newtheorem{theorem}{\indent\sc Theorem}[section]
\newtheorem{lemma}[theorem]{\indent\sc Lemma}

\newtheorem{proposition}[theorem]{\indent\sc Proposition}

\theoremstyle{definition} 
\newtheorem{definition}[theorem]{\indent\sc Definition}

\newtheorem{remark}[theorem]{\indent\sc Remark}
\newtheorem{example}[theorem]{\indent\sc Example}

%

%

\makeatletter
\def\address#1#2{\begingroup
\noindent\parbox[t]{7.8cm}{%
\small{\scshape\ignorespaces#1}\par\vskip1ex
\noindent\small{\itshape E-mail address}%
\/: #2\par\vskip4ex}\hfill%
\endgroup}%
\makeatother
%
\title{Generators of Siegel modular function field of higher genus and level} 
\author{
\textsc{Ja Kyung Koo, Dong Hwa Shin and Dong Sung Yoon$^*$} 
}

\date{} 
%

\begin{document}

\maketitle

\footnote{ 
2010 \textit{Mathematics Subject Classification}. Primary 11F46, Secondary 14K25.}
\footnote{ 
\textit{Key words and phrases}.
Siegel modular functions, Siegel families, theta constants.}
\footnote{
$^*$Corresponding author.\\
\thanks{The second named author was
supported by Hankuk University of Foreign Studies Research Fund of
2016.} }

\vspace{-1cm}
\begin{center}
Dedicated to the late professor Jun-ichi Igusa
\end{center}

\begin{abstract}
For positive integers $g$ and $N$, let $\mathcal{F}_N$ be the field
of meromorphic Siegel modular functions of genus $g$ and level $N$ whose Fourier coefficients belong to the $N$th cyclotomic field. We present explicit generators of $\mathcal{F}_N$ over $\mathcal{F}_1$
in terms of quotient of theta constants,
when $g\geq2$ and $N\geq 3$.
\end{abstract}

\section{Introduction}

Let $g$ and $N$ be positive integers. We denote by $\mathcal{F}_N$
the field of meromorphic Siegel modular functions of genus $g$ and level $N$
whose Fourier coefficients belong to the $N$th cyclotomic field ($\S$\ref{sect2}).
Then, $\mathcal{F}_1$ is an algebraic function field of transcendence degree
$g(g+1)/2$ (\cite{Klingen}), and $\mathcal{F}_N$ is a finite Galois extension of $\mathcal{F}_1$
with $\mathrm{Gal}(\mathcal{F}_N/\mathcal{F}_1)\simeq\mathrm{GSp}_{2g}(\mathbb{Z}/N\mathbb{Z})/\{\pm I_{2g}\}$
(\cite{Shimura00} or \cite{K-S-Y}).
\par
In particular, when $g=1$, it is well known that
\begin{equation*}
\mathcal{F}_N=\left\{\begin{array}{ll}
\mathbb{Q}(j(\tau)) & \textrm{if}~N=1,\\
\mathcal{F}_1\left(f_\mathbf{v}(\tau)~|~\textrm{$\mathbf{v}\in\mathbb{Q}^2$
has exact denominator $N$}\right)& \textrm{if}~N\geq2,
\end{array}\right.
\end{equation*}
where $j(\tau)$ is the elliptic modular function and $f_\mathbf{v}(\tau)$
are Fricke functions (\cite{Lang} and \cite{Shimura71}).
Moreover, if $K$ is an imaginary quadratic field and $\tau_0$ is a CM-point then the field
\begin{equation*}
K\mathcal{F}_N(\tau_0)=K\left(f(\tau_0)~|~\textrm{$f\in\mathcal{F}_N$ which is finite at $\tau_0$}\right)
\end{equation*}
is the ray class field of $K$ with conductor $N$ (\cite[Chapter 10, \S 1]{Lang}).
To extend the result to CM-field case, we first need to find generators of $\mathcal{F}_N$ for arbitrary genus $g$.
\par
When $g=2$, Igusa determined in \cite{Igusa62}  three generators of $\mathcal{F}_1$ in terms of Siegel Eisenstein series. Furthermore, when $g=3$, Tsuyumine gave in \cite{Tsuyumine} seven
explicit generators of $\mathcal{F}_1$ which are ratios of Siegel modular forms of weight
at most $30$. For higher genus and level, although Siegel proved in \cite{Siegel} that every function in $\mathcal{F}_N$ ($N\geq3$)
can be expressed as a ratio of theta constants, 
it seems to be hard to get a finite number of generators of $\mathcal{F}_N$ explicitly.
\par
Let $g\geq2$ and $N\geq2$. The purpose of this paper is to investigate explicit generators of $\mathcal{F}_N$ over $\mathcal{F}_1$.
To this end we shall first introduce the notion
of a primitive Siegel family (Definition \ref{defSiegelfamily}). By using the order formula
for one-variable Siegel functions (Proposition \ref{Siegelproperty} (v))
we shall also give a concrete example of a primitive Siegel family
whenever $N\neq2,4$ and $(2^g-1)\nmid N$
(Theorem \ref{Thetaprimitive}). Finally,
we shall present explicit generators of $\mathcal{F}_N$ over $\mathcal{F}_1$ as
quotient of theta constants developed in \cite{K-R-S-Y} when
$N\geq 3$ (Theorem \ref{mainresult}).

\section{Siegel modular functions}\label{sect2}

As a preliminary we shall briefly review meromorphic Siegel modular functions.
\par
Let $g$ be a positive integer. For a commutative ring $R$ with unity $1$,
we denote by
\begin{align*}
\mathrm{GSp}_{2g}(R)&=\left\{\alpha\in\mathrm{GL}_{2g}(R)~|~\alpha^TJ\alpha=\nu(\alpha)J~
\textrm{with}~\nu(\alpha)\in R^\times\right\},\\
\mathrm{Sp}_{2g}(R)&=\left\{\alpha\in\mathrm{GSp}_{2g}(R)~|~\nu(\alpha)=1\right\},
\end{align*}
where $J=\begin{bmatrix}O_g&-I_g\\I_g&O_g\end{bmatrix}$ and $\alpha^T$ stands for the transpose of $\alpha$.
Then, it is straightforward that $\alpha=\begin{bmatrix}A&B\\C&D\end{bmatrix}\in\mathrm{GL}_{2g}(R)$, where $A,B,C,D$ are $g\times g$ block matrices, belongs to $\mathrm{Sp}_{2g}(R)$ if and only if
\begin{equation*}
A^TD-C^TB=I_g,~A^TC=C^TA,~B^TD=D^TB.
\end{equation*}
Furthermore, we observe that if $\alpha\in\mathrm{GL}_{2g}(R)$ belongs to 
$\mathrm{Sp}_{2g}(R)$, then so does $\alpha^T$ (\cite[p. 17]{Shimura00}). 
\par
In particular, the group
\begin{equation*}
\mathrm{GSp}_{2g}(\mathbb{R})_+=\left\{\alpha\in\mathrm{GSp}_{2g}(\mathbb{R})~|~\nu(\alpha)>0\right\}
\end{equation*}
acts on the Siegel upper half-space
\begin{equation*}
\mathbb{H}_g=\left\{Z\in M_g(\mathbb{C})~|~Z^T=Z,~\mathrm{Im}(Z)~\textrm{is positive definite}\right\}
\end{equation*}
by
\begin{equation*}
\alpha(Z)=(AZ+B)(CZ+D)^{-1}\quad(\alpha=\begin{bmatrix}A&B\\C&D\end{bmatrix}\in\mathrm{GSp}_{2g}(\mathbb{R})_+,Z\in\mathbb{H}_g).
\end{equation*}
For a positive integer $N$, let
$\Gamma(N)$ be the principal congruence subgroup of level $N$
in $\mathrm{Sp}_{2g}(\mathbb{Z})$, namely
\begin{equation*}
\Gamma(N)=\left\{\alpha\in\mathrm{Sp}_{2g}(\mathbb{Z})~|~
\alpha\equiv I_{2g}\Mod{N\cdot M_{2g}(\mathbb{Z})}\right\}.
\end{equation*}
A holomorphic function $g:\mathbb{H}_g\rightarrow\mathbb{C}$ is called a
Siegel modular form of weight $k$ and level $N$ ($k\in\mathbb{Z}$) if it
satisfies
\begin{equation*}
g(\alpha(Z))=\det(CZ+D)^kg(Z)~\textrm{for all}~\alpha\in\Gamma(N).
\end{equation*}
When $g=1$, we further require that $g$ is holomorphic at every cusp. Then it can be written as
\begin{equation*}
g(Z)=\sum_{M}c(M)e\left(\frac{1}{N}\mathrm{tr}(MZ)\right)\quad(c(M)\in\mathbb{C}),
\end{equation*}
where $M$ runs over all $g\times g$ positive semi-definite symmetric matrices
over half integers with integral diagonal entries, and $e(z)=e^{2\pi iz}$ ($z\in\mathbb{C}$).
We call the above expression the Fourier expansion of $g$, and call $c(M)$ the Fourier coefficients of $g$.
\par
For a subfield $F$ of $\mathbb{C}$, we denote by
\begin{equation*}
\begin{array}{rcl}
\mathcal{M}_k(\Gamma(N),F)&=&\textrm{the space of Siegel modular forms of weight $k$ and level $N$}\\
&&\textrm{with Fourier coefficients in $F$},\vspace{0.1cm}\\
\mathcal{M}_k(F)&=&\displaystyle\bigcup_{N=1}^\infty\mathcal{M}_k(\Gamma(N),F),\vspace{0.1cm}\\
\mathcal{A}_0(\Gamma(N),F)&=&\textrm{the field of functions of the form $g/h$}\\
&&\textrm{with $g\in\mathcal{M}_k(F)$ and $h\in\mathcal{M}_k(F)\setminus\{0\}$ for the same weight $k$,}\\
&&\textrm{which are invariant under $\Gamma(N)$}.
\end{array}
\end{equation*}
In particular, we let
\begin{equation*}
\mathcal{F}_N=\mathcal{A}_0(\Gamma(N),\mathbb{Q}(\zeta_N)),~\textrm{where}~
\zeta_N=e(1/N).
\end{equation*}
As is well known, $\mathcal{F}_N$ is a Galois extension of $\mathcal{F}_1$
with
\begin{equation*}
\mathrm{Gal}(\mathcal{F}_N/\mathcal{F}_1)\simeq\mathrm{GSp}_{2g}(\mathbb{Z}/N\mathbb{Z})/\{\pm I_{2g}\}
\end{equation*}
(\cite[Theorem 8.10]{Shimura00} or \cite[Proposition 2.2]{K-S-Y}).
 More precisely, consider the decomposition
\begin{equation*}
\mathrm{GSp}_{2g}(\mathbb{Z}/N\mathbb{Z})/\{\pm I_{2g}\}\simeq
G_N\cdot\mathrm{Sp}_{2g}(\mathbb{Z}/N\mathbb{Z})/\{\pm I_{2g}\},
\end{equation*}
$\textrm{where}~G_N=
\left\{\begin{bmatrix}I_g&O_g\\O_g&\nu I_g\end{bmatrix}~|~\nu\in(\mathbb{Z}/N\mathbb{Z})^\times\right\}$.
The action of $\mathrm{GSp}_{2g}(\mathbb{Z}/N\mathbb{Z})/\{\pm I_{2g}\}$ on $\mathcal{F}_N$
is given as follows: Let $f=g/h$ be an element of $\mathcal{F}_N$ for some
$g,h\in\mathcal{M}_k(\mathbb{Q}(\zeta_N))$ with Fourier expansions
\begin{equation*}
g(Z)=\sum_M c(M)e\left(\frac{1}{N}\mathrm{tr}(MZ)\right)~\textrm{and}~
h(Z)=\sum_M d(M)e\left(\frac{1}{N}\mathrm{tr}(MZ)\right).
\end{equation*}
\begin{enumerate}
\item[(i)] An element $\begin{bmatrix}I_g&O_g\\O_g&\nu I_g\end{bmatrix}$ of $G_N$ acts on $f$ by
\begin{equation*}
f^{\left[\begin{smallmatrix}I_g&O_g\\O_g&\nu I_g\end{smallmatrix}\right]}
=\frac{\displaystyle\sum_M c(M)^\sigma e\left(\frac{1}{N}\mathrm{tr}(MZ)\right)}
{\displaystyle\sum_M d(M)^\sigma e\left(\frac{1}{N}\mathrm{tr}(MZ)\right)},
\end{equation*}
where $\sigma$ is the automorphism of $\mathbb{Q}(\zeta_N)$ defined by $\zeta_N^\sigma=\zeta_N^\nu$.
\item[(ii)] An element $\widetilde{\gamma}$ of $\mathrm{Sp}_{2g}(\mathbb{Z}/N\mathbb{Z})/\{\pm I_{2g}\}$
acts on $f$ by
\begin{equation*}
f^{\widetilde{\gamma}}=f\circ\gamma,
\end{equation*}
where ${\gamma}$ is any preimage of $\widetilde{\gamma}$ under the reduction
$\mathrm{Sp}_{2g}(\mathbb{Z})\rightarrow
\mathrm{Sp}_{2g}(\mathbb{Z}/N\mathbb{Z})/\{\pm I_2\}$.
\end{enumerate}

\section{Primitive Siegel families}

In this section we shall define a primitive Siegel family on which
$\mathrm{GSp}_{2g}(\mathbb{Z}/N\mathbb{Z})/\{\pm I_{2g}\}\simeq\mathrm{Gal}(\mathcal{F}_N/\mathcal{F}_1)$ acts in a natural way.
By making use of certain members of a primitive Siegel family, we shall construct Siegel modular function fields for various congruence subgroups of $\mathrm{Sp}_{2g}(\mathbb{Z}$).
\par
Let $g$ and $N$ be positive integers such that $N\geq2$. Let
\begin{equation*}
\mathcal{I}_N=\left\{\mathbf{v}\in\mathbb{Q}^{2g}~|~\textrm{$N$ is the
smallest positive integer so that $N\mathbf{v}\in\mathbb{Z}^{2g}$}\right\}.
\end{equation*}

\begin{definition}\label{defSiegelfamily}
We call a family
\begin{equation*}
\left\{f_\mathbf{v}(Z)\right\}_{\mathbf{v}\in\mathcal{I}_N}
\end{equation*}
a \textit{Siegel family} (indexed by $\mathcal{I}_N$) if
it satisfies the following properties:
\begin{enumerate}
\item[(S1)] Every $f_\mathbf{v}(Z)$ belongs to $\mathcal{F}_N$.
\item[(S2)] The group $\mathrm{GSp}_{2g}(\mathbb{Z}/N\mathbb{Z})/\{\pm I_{2g}\}$ ($\simeq\mathrm{Gal}(\mathcal{F}_N/\mathcal{F}_1)$) acts on the family as
\begin{equation*}
f_\mathbf{v}(Z)^\alpha
=f_{\alpha^T\mathbf{v}}(Z)\quad(\alpha\in\mathrm{GSp}_{2g}(\mathbb{Z}/N\mathbb{Z})/\{\pm I_{2g}\}).
\end{equation*}
\end{enumerate}
Moreover, we say that the family is \textit{primitive}
if
\begin{enumerate}
\item[(S3)] $f_\mathbf{v}=f_{\mathbf{v}'}$ if and only if
$\mathbf{v}\equiv\pm\mathbf{v}'\Mod{\mathbb{Z}^{2g}}$.
\end{enumerate}
\end{definition}

\begin{remark}
Note that if $\{f_\mathbf{v}(Z)\}_{\mathbf{v}\in\mathcal{I}_N}$ is a Siegel family, then
so is $\{f_\mathbf{v}(Z)^n\}_{\mathbf{v}\in\mathcal{I}_N}$ for every nonzero integer $n$.
\end{remark}

Let $\{\mathbf{e}_1,\mathbf{e}_2,\ldots,\mathbf{e}_{2g}\}$ be the
standard basis for $\mathbb{R}^{2g}$, and let
\begin{equation*}
\mathbf{e}=\mathbf{e}_1+\mathbf{e}_2+\cdots+\mathbf{e}_{2g}
\quad\textrm{and}\quad\mathbf{f}=\mathbf{e}_1+\cdots+\mathbf{e}_g.
\end{equation*}

\begin{proposition}\label{generators1}
If $\left\{f_\mathbf{v}(Z)\right\}_{\mathbf{v}\in\mathcal{I}_N}$ is
a primitive Siegel family, then
we have
\begin{equation}\label{F_N}
\mathcal{F}_N=
\mathcal{F}_1\left(
f_{(1/N)\mathbf{e}_1}(Z),
f_{(1/N)\mathbf{e}_2}(Z),
\ldots,
f_{(1/N)\mathbf{e}_{2g}}(Z),
f_{(1/N)\mathbf{e}}(Z)\right).
\end{equation}
\end{proposition}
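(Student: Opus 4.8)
The plan is to argue by Galois theory. Let $L$ denote the field on the right-hand side of \eqref{F_N}. Since every listed generator lies in $\mathcal{F}_N$ by (S1), we have an intermediate field $\mathcal{F}_1\subseteq L\subseteq\mathcal{F}_N$, and because $\mathcal{F}_N/\mathcal{F}_1$ is Galois with $\mathrm{Gal}(\mathcal{F}_N/\mathcal{F}_1)\simeq G:=\mathrm{GSp}_{2g}(\mathbb{Z}/N\mathbb{Z})/\{\pm I_{2g}\}$, it suffices to show that $H:=\mathrm{Gal}(\mathcal{F}_N/L)$ is trivial. As $L$ is generated over $\mathcal{F}_1$ by the listed functions and every element of $G$ fixes $\mathcal{F}_1$, an element $\alpha\in G$ lies in $H$ exactly when it fixes each of $f_{(1/N)\mathbf{e}_1},\ldots,f_{(1/N)\mathbf{e}_{2g}}$ and $f_{(1/N)\mathbf{e}}$. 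So I would fix such an $\alpha$ and show $\alpha=\pm I_{2g}$.

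First I would exploit the $2g$ conditions coming from the $\mathbf{e}_i$. By (S2) we have $f_{(1/N)\mathbf{e}_i}^{\alpha}=f_{\alpha^T(1/N)\mathbf{e}_i}$, so fixing this function forces, via (S3), the congruence $\alpha^T(1/N)\mathbf{e}_i\equiv\pm(1/N)\mathbf{e}_i\Mod{\mathbb{Z}^{2g}}$. Reading this componentwise and noting that $\alpha^T\mathbf{e}_i$ has $j$-th entry $\alpha_{ij}$, I obtain $\alpha_{ij}\equiv0\Mod{N}$ for $j\neq i$ and $\alpha_{ii}\equiv\epsilon_i\Mod{N}$ for some sign $\epsilon_i\in\{\pm1\}$ attached to the $i$-th row. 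Hence $\alpha\equiv\mathrm{diag}(\epsilon_1,\ldots,\epsilon_{2g})\Mod{N}$ is a diagonal matrix with entries $\pm1$.

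The remaining step, which is the crux, is to use the single extra generator $f_{(1/N)\mathbf{e}}$ to force all the signs $\epsilon_i$ to agree. Since $\alpha$ is congruent mod $N$ to a $\pm1$-diagonal matrix, the $j$-th entry of $\alpha^T\mathbf{e}$ is the column sum $\sum_i\alpha_{ij}\equiv\epsilon_j\Mod{N}$, so applying (S2) and (S3) to $f_{(1/N)\mathbf{e}}$ yields $(1/N)(\epsilon_1,\ldots,\epsilon_{2g})^T\equiv\pm(1/N)\mathbf{e}\Mod{\mathbb{Z}^{2g}}$ for one global sign $\epsilon\in\{\pm1\}$, that is, $\epsilon_i\equiv\epsilon\Mod{N}$ for every $i$. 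For $N\geq3$ one has $1\not\equiv-1\Mod{N}$, so each $\epsilon_i$ equals $\epsilon$ and therefore $\alpha\equiv\epsilon I_{2g}=\pm I_{2g}\Mod{N}$; for $N=2$ the conditions from the $\mathbf{e}_i$ already give $\alpha\equiv I_{2g}\Mod{2}$ because $\pm1$ coincide modulo $2$. In either case $\alpha$ is the identity of $G$, whence $H$ is trivial and $L=\mathcal{F}_N$.

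I expect the genuinely substantive point to be this last paragraph: the $2g$ coordinate vectors only determine $\alpha$ up to an independent sign in each diagonal slot, and it is precisely the all-ones vector $\mathbf{e}$ that couples these signs and collapses the residual sign ambiguities into the single scalar $\pm I_{2g}$ that we have already quotiented out by. Everything else is a routine translation of the fixing conditions through properties (S2) and (S3).
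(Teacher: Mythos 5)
Your proposal is correct and follows essentially the same route as the paper: use (S2) and (S3) on the coordinate vectors $f_{(1/N)\mathbf{e}_j}$ to force a fixing $\alpha$ to be diagonal with entries $\pm1$ modulo $N$, then use $f_{(1/N)\mathbf{e}}$ to synchronize the signs to a single global $\pm I_{2g}$. The only difference is cosmetic: you spell out the $N=2$ case and the sign bookkeeping slightly more explicitly than the paper does.
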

\begin{proof}
Let $E$ be the field on the right-hand side of (\ref{F_N})
which is a subfield of $\mathcal{F}_N$ by (S1).
Suppose that an element $\alpha=\left[a_{ij}\right]$ of
$\mathrm{GSp}_{2g}(\mathbb{Z}/N\mathbb{Z})/\{\pm I_{2g}\}$
leaves $E$ fixed elementwise.
Let $\mathbf{r}_1,\mathbf{r}_2,\ldots,\mathbf{r}_{2g}$ be the row vectors of $\alpha$.
We then derive by (S2) 
\begin{equation*}
f_{(1/N)\mathbf{e}_j}(Z)
=f_{(1/N)\mathbf{e}_j}(Z)^\alpha
=f_{(1/N)\alpha^T\mathbf{e}_j}(Z)=
f_{(1/N)\mathbf{r}_j^T}(Z)\quad(j=1,2,\ldots,2g).
\end{equation*}
Thus we obtain $(1/N)\mathbf{e}_j
\equiv\pm(1/N)\mathbf{r}_j^T\Mod{\mathbb{Z}^{2g}}$ by (S3), and hence
\begin{equation*}
\alpha\equiv\begin{bmatrix}
\pm1 & 0 & \cdots & 0\\
0& \pm1  &\cdots & 0\\
\vdots & \vdots & \ddots & \vdots\\
0&0&\cdots& \pm1
\end{bmatrix}\Mod{N\cdot M_{2g}(\mathbb{Z})}.
\end{equation*}
And, since
\begin{equation*}
f_{(1/N)\mathbf{e}}(Z)=
f_{(1/N)\mathbf{e}}(Z)^\alpha=
f_{(1/N)\alpha^T\mathbf{e}}(Z)
\end{equation*}
by (S2), we get by (S3) 
\begin{equation*}
\begin{bmatrix}a_{11}\\a_{22}\\\vdots\\a_{2g\,2g}\end{bmatrix}\equiv
\pm\begin{bmatrix}1\\1\\\vdots\\1\end{bmatrix}
\Mod{N\cdot\mathbb{Z}^{2g}}.
\end{equation*}
Hence $\alpha$ represents the identity element of $\mathrm{Gal}(\mathcal{F}_N/\mathcal{F}_1)$, which proves $\mathcal{F}_N=E$, as desired.
\end{proof}

We further consider the following congruence subgroups of $\mathrm{Sp}_{2g}(\mathbb{Z})$:
\begin{align*}
\Gamma^1(N)&=\left\{\alpha\in\mathrm{Sp}_{2g}(\mathbb{Z})~|~
\alpha\equiv\begin{bmatrix}I_g & \mathrm{*} \\O_g & I_g\end{bmatrix}\Mod{N\cdot M_{2g}(\mathbb{Z})}\right\},\\
\Gamma_1(N)&=\left\{\alpha\in\mathrm{Sp}_{2g}(\mathbb{Z})~|~
\alpha\equiv\begin{bmatrix}I_g &O_g \\\mathrm{*} & I_g\end{bmatrix}\Mod{N\cdot M_{2g}(\mathbb{Z})}\right\}.
\end{align*}
Let $\mathcal{F}^1_N(\mathbb{Q})$
and $\mathcal{F}_{1,N}(\mathbb{Q})$ be the subfields of
$\mathcal{F}_N$ consisting of functions with rational Fourier coefficients
which are invariant under $\Gamma^1(N)$ and $\Gamma_1(N)$, respectively.

\begin{proposition}\label{generators2}
Let $\left\{f_\mathbf{v}(Z)\right\}_{\mathbf{v}\in\mathcal{I}_N}$ be
a primitive Siegel family. Then,
\begin{enumerate}
\item[\textup{(i)}] $\mathcal{F}^1_N(\mathbb{Q})=
\mathcal{F}_1\left(
f_{(1/N)\mathbf{e}_1}(Z),\ldots,
f_{(1/N)\mathbf{e}_g}(Z),
f_{(1/N)\mathbf{f}}(Z)\right)$.
\item[\textup{(ii)}]
$\mathcal{F}_{1,N}(\mathbb{Q})=
\mathbb{Q}\left(f(NZ),
f_{(1/N)\mathbf{e}_1}(NZ),\ldots,
f_{(1/N)\mathbf{e}_g}(NZ),
f_{(1/N)\mathbf{f}}(NZ)~|~f(Z)\in\mathcal{F}_1\right)$.
\end{enumerate}
\end{proposition}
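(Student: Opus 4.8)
The plan is to prove both parts by the Galois-theoretic template of Proposition~\ref{generators1}, replacing the full generating set by the reduced one and identifying, in each case, the subgroup of $\mathrm{Gal}(\mathcal{F}_N/\mathcal{F}_1)\simeq\mathrm{GSp}_{2g}(\mathbb{Z}/N\mathbb{Z})/\{\pm I_{2g}\}$ whose fixed field is the subfield in question. For part (i) I first note that, by the Galois correspondence, $\mathcal{F}^1_N(\mathbb{Q})$ is the fixed field of the subgroup $H^1$ generated by the reduction of $\Gamma^1(N)$ modulo $N$ and by $G_N$: invariance under $\Gamma^1(N)$ is the same as being fixed by the reduced group (every element of $\mathcal{F}_N$ is already $\Gamma(N)$-invariant), while rationality of the Fourier coefficients is, by the description of the $G_N$-action in $\S\ref{sect2}$, exactly $G_N$-invariance. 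Writing $E_1$ for the field on the right-hand side of (i), the inclusion $E_1\subseteq\mathcal{F}^1_N(\mathbb{Q})$ is checked as in Proposition~\ref{generators1}, by computing $\alpha^T\mathbf{v}$ for $\mathbf{v}\in\{(1/N)\mathbf{e}_1,\ldots,(1/N)\mathbf{e}_g,(1/N)\mathbf{f}\}$ and $\alpha\in H^1$ and invoking (S2) and (S3) to see that each listed generator is $H^1$-fixed.

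For the reverse inclusion I take $\alpha=[a_{ij}]\in\mathrm{Gal}(\mathcal{F}_N/\mathcal{F}_1)$ fixing $E_1$ pointwise and show $\alpha\in H^1$. Writing $\mathbf{r}_1,\ldots,\mathbf{r}_{2g}$ for the rows of $\alpha$, the equalities $f_{(1/N)\mathbf{e}_j}=f_{(1/N)\mathbf{e}_j}^\alpha=f_{(1/N)\mathbf{r}_j^T}$ for $j=1,\ldots,g$ force, by (S3), $\mathbf{r}_j\equiv\pm\mathbf{e}_j^T\Mod{N}$, so that the first $g$ rows of $\alpha$ are signed standard basis vectors; the single extra relation from $f_{(1/N)\mathbf{f}}$ then collapses these $g$ a~priori independent signs into one global sign, which disappears in the quotient by $\{\pm I_{2g}\}$. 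The remaining block of $\alpha$ is pinned down by the defining relations of $\mathrm{GSp}_{2g}$, placing $\alpha\in H^1$ and giving $\mathcal{F}^1_N(\mathbb{Q})\subseteq E_1$. I expect this last step to be the main obstacle: unlike in Proposition~\ref{generators1}, the reduced generating set constrains only half of $\alpha$ directly, so one must genuinely use the symplectic-similitude relations (and the role of $\mathbf{f}$ in forcing a uniform sign) to conclude, rather than reading off $\alpha$ coordinate by coordinate.

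For part (ii) the plan is to deduce the statement from part (i) by transporting it through the substitution $Z\mapsto NZ$. I introduce $\alpha_0=\begin{bmatrix}NI_g&O_g\\O_g&I_g\end{bmatrix}\in\mathrm{GSp}_{2g}(\mathbb{R})_+$, for which $\alpha_0(Z)=NZ$, so that $S:h\mapsto h\circ\alpha_0=h(NZ)$ is a field homomorphism of meromorphic functions. The key structural input is the conjugation identity $\alpha_0^{-1}\Gamma_1(N)\alpha_0=\Gamma^1(N)$, obtained by a direct block computation (conjugation by $\alpha_0$ divides the upper-right block by $N$ and multiplies the lower-left block by $N$); it shows that $S$ interchanges $\Gamma^1(N)$-invariance and $\Gamma_1(N)$-invariance. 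Since $S$ also preserves rationality of Fourier coefficients, rescaling only the exponential variable by sending $e(\mathrm{tr}(MZ)/N)$ to $e(\mathrm{tr}(MZ))$ and leaving the coefficients untouched, and since $S$ is invertible with inverse $h\mapsto h(Z/N)$, it restricts to a field isomorphism between $\mathcal{F}^1_N(\mathbb{Q})$ and $\mathcal{F}_{1,N}(\mathbb{Q})$.

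Applying this isomorphism to the identity of part (i) then finishes the proof: a field isomorphism carries the field generated by a set to the field generated by the image set, so $S$ sends $\mathcal{F}_1$ to $\{f(NZ)\mid f\in\mathcal{F}_1\}$ and each generator $f_{(1/N)\mathbf{e}_j}$, $f_{(1/N)\mathbf{f}}$ to its $NZ$-translate, whereby the right-hand side of (ii) is exactly the image under $S$ of the right-hand side of (i), and the interchange of congruence subgroups identifies this image with $\mathcal{F}_{1,N}(\mathbb{Q})$. The change from $\mathcal{F}_1(\cdots)$ in (i) to $\mathbb{Q}(\cdots\mid f\in\mathcal{F}_1)$ in (ii) reflects precisely that $S$ does not fix $\mathcal{F}_1$ pointwise but moves it to its image. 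The main obstacle here is justifying the interchange cleanly: one must verify the conjugation identity and, because $h(NZ)$ need not remain in $\mathcal{F}_N$ for an arbitrary $h\in\mathcal{F}_N$, argue inside the two invariant subfields (and the ambient field of meromorphic functions) rather than inside $\mathcal{F}_N$ as a whole, checking that $S$ and $S^{-1}$ really map one invariant field onto the other.
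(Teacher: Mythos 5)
Your proposal follows the paper's argument essentially verbatim: part (i) is the same Galois-theoretic computation (the relations for $f_{(1/N)\mathbf{e}_1},\ldots,f_{(1/N)\mathbf{e}_g}$ pin down the first $g$ rows of $\alpha$ up to individual signs via (S2)--(S3), the single relation for $f_{(1/N)\mathbf{f}}$ collapses them to one global sign, and the symplectic conditions determine the remaining blocks), and part (ii) is the same transport of (i) through $Z\mapsto NZ$ using the conjugation that interchanges $\Gamma^1(N)$ and $\Gamma_1(N)$ --- the paper's conjugating matrix $\mathrm{diag}(\sqrt{N}I_g,(1/\sqrt{N})I_g)$ is a scalar multiple of your $\mathrm{diag}(NI_g,I_g)$ and induces the identical action. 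There is no substantive difference in approach.
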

\begin{proof}
\begin{enumerate}
\item[(i)] Let $L=\mathcal{F}_1\left(
f_{(1/N)\mathbf{e}_1}(Z),\ldots,
f_{(1/N)\mathbf{e}_g}(Z),
f_{(1/N)\mathbf{f}}(Z)\right)$. For any $\gamma\in\Gamma^1(N)$ and
$\mathbf{r}\in(1/N)\mathbb{Z}^g$, we see that
\begin{equation*}
\gamma^T\begin{bmatrix}\mathbf{r}\\\mathbf{0}\end{bmatrix}\equiv
\begin{bmatrix}I_g&\mathrm{*}\\O_g&I_g\end{bmatrix}\begin{bmatrix}\mathbf{r}\\\mathbf{0}\end{bmatrix}
\equiv\begin{bmatrix}\mathbf{r}\\\mathbf{0}\end{bmatrix}\Mod{\mathbb{Z}^{2g}}.
\end{equation*}
This implies that any function in $L$ is modular for $\Gamma^1(N)$.
Moreover, since
\begin{equation*}
\begin{bmatrix}I_g&O_g\\O_g&\nu I_g\end{bmatrix}\begin{bmatrix}\mathbf{r}\\\mathbf{0}\end{bmatrix}
\equiv\begin{bmatrix}\mathbf{r}\\\mathbf{0}\end{bmatrix}\Mod{\mathbb{Z}^{2g}}
\end{equation*}
for all $\nu\in(\mathbb{Z}/N\mathbb{Z})^\times$, every function in $L$ has rational Fourier coefficients. 
Thus we reach the inclusion $L\subseteq\mathcal{F}^1_N(\mathbb{Q})$.
\par
Now, let $\alpha=\left[a_{ij}\right]$ be an element of $\mathrm{Sp}_{2g}(\mathbb{Z}/N\mathbb{Z})/
\{\pm I_{2g}\}$ which leaves $L$ fixed. Let $\mathbf{r}_1,\mathbf{r}_2,\ldots,\mathbf{r}_{2g}$ be the row vectors of $\alpha$. Since
\begin{equation*}
f_{(1/N)\mathbf{e}_j}(Z)=f_{(1/N)\mathbf{e}_j}(Z)^\alpha
=f_{(1/N)\alpha^T\mathbf{e}_j}(Z)
\quad(j=1,\ldots, g)
\end{equation*}
by (S2), we attain by (S3) 
\begin{equation*}
\mathbf{e}_j\equiv\pm\alpha^T\mathbf{e}_j\equiv\pm\mathbf{r}_j
\Mod{N\cdot\mathbb{Z}^{2g}}\quad(j=1,\ldots, g).
\end{equation*}
This gives
\begin{equation*}
\alpha\equiv\left[\begin{array}{ccc|ccc}
\pm1 & \cdots & 0 & 0 & \cdots & 0\\
\vdots &\ddots & \vdots & \vdots& \ddots& \vdots\\
0&\cdots&\pm1&0&\cdots&0\\\hline
\mathrm{*} & \cdots & \mathrm{*} & \mathrm{*} & \cdots & \mathrm{*}\\
\vdots &\ddots & \vdots & \vdots& \ddots& \vdots\\
\mathrm{*}&\cdots&\mathrm{*}&\mathrm{*}&\cdots&\mathrm{*}
\end{array}\right]\Mod{N\cdot M_{2g}(\mathbb{Z})}.
\end{equation*}
And, since
\begin{equation*}
f_{(1/N)\mathbf{f}}(Z)=f_{(1/N)\mathbf{f}}(Z)^\alpha
=f_{(1/N)\alpha^T\mathbf{f}}(Z)
\end{equation*}
by (S2), we get by (S3) that
\begin{equation*}
\begin{bmatrix}
1\\\vdots\\1\\0\\\vdots\\0
\end{bmatrix}\equiv\pm\alpha^T\begin{bmatrix}
1\\\vdots\\1\\0\\\vdots\\0
\end{bmatrix}\equiv
\pm\left[\begin{array}{ccc|ccc}
\pm1 & \cdots & 0 & \mathrm{*} & \cdots & \mathrm{*}\\
\vdots &\ddots & \vdots & \vdots& \ddots& \vdots\\
0&\cdots&\pm1&\mathrm{*}&\cdots&\mathrm{*}\\\hline
0 & \cdots & 0 & \mathrm{*} & \cdots & \mathrm{*}\\
\vdots &\ddots & \vdots & \vdots& \ddots& \vdots\\
0&\cdots&0&\mathrm{*}&\cdots&\mathrm{*}
\end{array}\right]
\begin{bmatrix}
1\\\vdots\\1\\0\\\vdots\\0
\end{bmatrix}\equiv
\begin{bmatrix}
\pm1\\\vdots\\\pm1\\0\\\vdots\\0
\end{bmatrix}\Mod{N\cdot\mathbb{Z}^{2g}}.
\end{equation*}
Thus we achieve
\begin{equation*}
\alpha\equiv\pm\begin{bmatrix}I_g&O_g\\\mathrm{*}&\mathrm{*}\end{bmatrix}
\Mod{N\cdot M_{2g}(\mathbb{Z})}.
\end{equation*}
It then follows from the fact $\alpha\in\mathrm{Sp}_{2g}(\mathbb{Z}/N\mathbb{Z})/\{\pm I_{2g}\}$ that
\begin{equation*}
\alpha\equiv\pm\begin{bmatrix}I_g& O_g\\\mathrm{*}&I_g\end{bmatrix}
\Mod{N\cdot M_{2g}(\mathbb{Z})},
\end{equation*}
and so $\alpha$ leaves $\mathcal{F}^1_N(\mathbb{Q})$ fixed. This implies that
\begin{equation*}
\mathrm{Gal}(\mathcal{F}_N/L)\subseteq\mathrm{Gal}(\mathcal{F}_N/\mathcal{F}^1_N(\mathbb{Q})),
\end{equation*}
and hence we get the converse inclusion $L\supseteq\mathcal{F}^1_N(\mathbb{Q})$. Therefore we conclude that $L=\mathcal{F}^1_N(\mathbb{Q})$.
\item[(ii)] Let $R=\mathbb{Q}\left(f(NZ),
f_{(1/N)\mathbf{e}_1}(NZ),\ldots,
f_{(1/N)\mathbf{e}_g}(NZ),
f_{(1/N)\mathbf{f}}(NZ)~|~f(Z)\in\mathcal{F}_1\right)$.
One can readily check that in $\mathrm{Sp}_{2g}(\mathbb{R})$
\begin{equation*}
\Gamma^1(N)=\gamma\Gamma_1(N)\gamma^{-1},~\textrm{where}~
\gamma=\begin{bmatrix}\sqrt{N}I_g & O_g\\O_g&(1/\sqrt{N})I_g\end{bmatrix}.
\end{equation*}
Thus we have the isomorphism
\begin{eqnarray*}
\mathcal{F}^1_N(\mathbb{Q})&\rightarrow&\mathcal{F}_{1,N}(\mathbb{Q})\\
f(Z)&\mapsto&(f\circ\gamma)(Z)=f(NZ).
\end{eqnarray*}
This proves $R=\mathcal{F}_{1,N}(\mathbb{Q})$.
\end{enumerate}
\end{proof}

\section{Theta constants}

In this section, we shall give a concrete example of a Siegel family
in terms of ratios of theta constants. Furthermore, we shall develop
a useful lemma for later sections concerning primitivity
of a Siegel family. 
\par
Let $N\geq2$. For a vector
$\mathbf{v}=\begin{bmatrix}r\\s\end{bmatrix}\in(1/N)\mathbb{Z}^2\setminus\mathbb{Z}^2$,
the Siegel function $g_\mathbf{v}(\tau)$ is
defined on the complex upper half-plane
$\mathbb{H}_1=\left\{\tau\in\mathbb{C}~|~\mathrm{Im}(\tau)>0\right\}$ by
\begin{equation}\label{Siegel}
g_\mathbf{v}(\tau)=-q^{(1/2)
\mathbf{B}_2(r)}e(s(r-1)/2)(1-q^re(s))\prod_{n=1}^{\infty}
(1-q^{n+r}e(s))(1-q^{n-r}e(-s)),
\end{equation}
where $\mathbf{B}_2(r)=r^2-r+1/6$ is the second Bernoulli polynomial and $q=e(\tau)$.
It has no zeros nor poles on $\mathbb{H}_1$.
\par
For a real number $x$, let $\langle x\rangle$ be the
fractional part of $x$ in the interval $[0,1)$. 

\begin{proposition}\label{Siegelproperty}
We have the following properties of Siegel functions:
\begin{enumerate}
\item[\textup{(i)}] $g_\mathbf{v}(\tau)^{12N}$
depends only on $\pm\mathbf{v}\Mod{\mathbb{Z}^2}$.
\item[\textup{(ii)}] $g_\mathbf{v}(\tau)^{12N}$ belongs to $\mathcal{F}_N$.
\item[\textup{(iii)}] Each $\alpha\in\mathrm{GL}_2(\mathbb{Z}/N\mathbb{Z})/\{\pm I_2\}
\simeq\mathrm{Gal}(\mathcal{F}_N/\mathcal{F}_1)$ acts on the function $g_\mathbf{v}(\tau)^{12N}$ by
\begin{equation*}
\left(g_\mathbf{v}(\tau)^{12N}\right)^\alpha
=g_{\alpha^T\mathbf{v}}(\tau)^{12N}.
\end{equation*}
\item[\textup{(iv)}]
Let $n$ be a nonzero integer. Then $g_\mathbf{v}(\tau)^{12Nn}=g_{\mathbf{v}'}(\tau)^{12Nn}$ if and only if $\mathbf{v}\equiv\pm\mathbf{v}'\Mod{\mathbb{Z}^{2}}$.
\item[\textup{(v)}] $\mathrm{ord}_q~g_\mathbf{v}(\tau)=(1/2)\mathbf{B}_2(\langle r\rangle)$.
\end{enumerate}
\end{proposition}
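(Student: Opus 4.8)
The plan is to reduce all five assertions to the two fundamental transformation laws of one-variable Siegel functions, which are most transparent through the fact that $g_{\mathbf{v}}(\tau)$ equals, up to normalization, the product $\mathfrak{k}_{\mathbf{v}}(\tau)\,\eta(\tau)^2$ of a Klein form and the square of the Dedekind $\eta$-function (Kubert--Lang, \emph{Modular Units}). From this factorization I would extract: (a) a \emph{quasi-periodicity} law $g_{\mathbf{v}+\mathbf{w}}(\tau)=\epsilon(\mathbf{v},\mathbf{w})\,g_{\mathbf{v}}(\tau)$ for $\mathbf{w}\in\mathbb{Z}^2$, together with $g_{-\mathbf{v}}(\tau)=-g_{\mathbf{v}}(\tau)$, where $\epsilon(\mathbf{v},\mathbf{w})$ is a root of unity whose order divides $2N$ because the entries of $\mathbf{v}$ lie in $(1/N)\mathbb{Z}$; and (b) an $\mathrm{SL}_2(\mathbb{Z})$-transformation law $g_{\mathbf{v}}(\gamma\tau)=\varepsilon_{\gamma}\,g_{\gamma^T\mathbf{v}}(\tau)$, in which the weight $-1$ of $\mathfrak{k}_{\mathbf{v}}$ cancels the weight $+1$ of $\eta^2$ and $\varepsilon_{\gamma}$ is a root of unity with $\varepsilon_{\gamma}^{12}=1$ coming from the $\eta^2$-multiplier (in particular $\varepsilon_\gamma$ is independent of $\mathbf{v}$). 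These two laws, together with the explicit product \eqref{Siegel}, drive everything; the exponent $12N$ is chosen precisely so that both $\epsilon(\mathbf{v},\mathbf{w})^{12N}=1$ (as $2N\mid 12N$) and $\varepsilon_\gamma^{12N}=1$ (as $12\mid 12N$).

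Item (i) is then immediate from (a): raising to the $12N$th power kills $\epsilon(\mathbf{v},\mathbf{w})^{12N}=1$, and the sign from $g_{-\mathbf{v}}=-g_{\mathbf{v}}$ disappears because $12N$ is even, so $g_{\mathbf{v}}^{12N}$ depends only on $\pm\mathbf{v}\Mod{\mathbb{Z}^2}$. For (ii) I would combine (b) with (i): for $\gamma\in\Gamma(N)$ one has $\gamma^T\mathbf{v}\equiv\mathbf{v}\Mod{\mathbb{Z}^2}$ (since $\gamma^T-I_2\in N\cdot M_2(\mathbb{Z})$ and $\mathbf{v}\in(1/N)\mathbb{Z}^2$), while $\varepsilon_{\gamma}^{12N}=1$, so $g_{\mathbf{v}}^{12N}\circ\gamma=g_{\gamma^T\mathbf{v}}^{12N}=g_{\mathbf{v}}^{12N}$; thus $g_{\mathbf{v}}^{12N}$ is $\Gamma(N)$-invariant. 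It is holomorphic and non-vanishing on $\mathbb{H}_1$ as noted after \eqref{Siegel}, and its $q$-expansion (expand \eqref{Siegel} and raise to the $12N$th power) is meromorphic at the cusps with coefficients in $\mathbb{Q}(\zeta_N)$; the only delicate point is the prefactor, whose $12N$th power is $\zeta_N^{6b(a-N)}$ when $\mathbf{v}=(a/N,b/N)^T$. Membership in $\mathcal{F}_N=\mathcal{A}_0(\Gamma(N),\mathbb{Q}(\zeta_N))$ then follows from the characterization of this field recalled in \S\ref{sect2}.

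For (iii) I would treat the two factors of the decomposition $\mathrm{GL}_2(\mathbb{Z}/N\mathbb{Z})/\{\pm I_2\}\simeq G_N\cdot\mathrm{Sp}_2(\mathbb{Z}/N\mathbb{Z})/\{\pm I_2\}$ separately. On the $\mathrm{Sp}_2$-part the Galois action is $f\mapsto f\circ\gamma$ for a lift $\gamma\in\mathrm{SL}_2(\mathbb{Z})$ (\S\ref{sect2}), and (b) together with $\varepsilon_\gamma^{12N}=1$ gives $(g_{\mathbf{v}}^{12N})^{\gamma}=g_{\gamma^T\mathbf{v}}^{12N}$. On $G_N$, represented by $\alpha=\mathrm{diag}(1,\nu)$, the action is the coefficientwise automorphism $\sigma:\zeta_N\mapsto\zeta_N^{\nu}$ (\S\ref{sect2}); inspecting the $q$-expansion of $g_{\mathbf{v}}^{12N}$, whose coefficients lie in $\mathbb{Q}(\zeta_N)$ by (ii), shows that $\sigma$ raises to the $\nu$th power every power of $\zeta_N$ arising from $s$, hence sends $g_{\mathbf{v}}^{12N}$ to $g_{(r,\nu s)}^{12N}=g_{\alpha^T\mathbf{v}}^{12N}$ since $\alpha^T=\alpha$. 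Multiplying the two actions yields the stated formula for arbitrary $\alpha$. Assertion (v) is a direct order count in \eqref{Siegel}: using the quasi-periodicity in (a), which preserves $\mathrm{ord}_q$, I reduce to $r=\langle r\rangle\in[0,1)$, where the prefactor contributes $(1/2)\mathbf{B}_2(r)$ and every factor $(1-q^{r}e(s))$, $(1-q^{n\pm r}e(\pm s))$ has $q$-order $0$ (using $s\notin\mathbb{Z}$ when $r=0$), so $\mathrm{ord}_q\,g_{\mathbf{v}}=(1/2)\mathbf{B}_2(\langle r\rangle)$.

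Finally (iv): the ``if'' direction follows from (i) after raising to the $n$th power, and only the ``only if'' direction is substantial; this is the main obstacle. If $g_{\mathbf{v}}^{12Nn}=g_{\mathbf{v}'}^{12Nn}$, then $h:=g_{\mathbf{v}}/g_{\mathbf{v}'}$ is holomorphic and non-vanishing on the connected domain $\mathbb{H}_1$ with $h^{12Nn}\equiv1$, hence $h$ is a constant root of unity and $g_{\mathbf{v}}=\zeta\,g_{\mathbf{v}'}$. Comparing leading $q$-orders and invoking (v) gives $\mathbf{B}_2(\langle r\rangle)=\mathbf{B}_2(\langle r'\rangle)$, so $r\equiv\pm r'\Mod{\mathbb{Z}}$; after possibly replacing $\mathbf{v}'$ by $-\mathbf{v}'$ (legitimate since $g_{-\mathbf{v}'}=-g_{\mathbf{v}'}$) and using (a) to shift $r'$ by an integer, I may assume $r=r'\in[0,1)$. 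Then matching the next nonzero Fourier coefficient in \eqref{Siegel}, namely that of $q^{\min(r,\,1-r)}$ (equal to $-e(s)$ or $-e(-s)$), forces $s\equiv s'\Mod{\mathbb{Z}}$ when $r\notin\{0,1/2\}$ and $s\equiv\pm s'\Mod{\mathbb{Z}}$ when $r\in\{0,1/2\}$; in every case this gives $\mathbf{v}\equiv\pm\mathbf{v}'\Mod{\mathbb{Z}^2}$, the residual sign ambiguity at $r\in\{0,1/2\}$ being exactly the freedom $\mathbf{v}'\mapsto-\mathbf{v}'$ since there $r\equiv-r$. The hard part is this coefficient comparison across the boundary cases; alternatively, the injectivity may be quoted from the distinctness theory of Siegel units in Kubert--Lang.
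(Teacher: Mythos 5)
Your proposal is essentially correct, but you should know that the paper offers no argument of its own for this proposition: its entire proof is a citation, namely \cite[$\S$2.1]{K-L} for (i)--(iii) and (v), and \cite[Example 3.1]{J-K-S} for the injectivity statement (iv). What you have done is reconstruct the content of those references, and your reconstruction follows the same mechanism they use: the factorization $g_\mathbf{v}=\mathfrak{k}_\mathbf{v}\,\eta^2$, the quasi-periodicity of Klein forms with a root-of-unity ambiguity of order dividing $2N$, and the multiplier-free $\mathrm{SL}_2(\mathbb{Z})$-law for $\mathfrak{k}_\mathbf{v}$ combined with the $\eta^2$-multiplier of order dividing $12$ (whence the exponent $12N$), while your treatment of (iv) --- constancy of the nowhere-vanishing ratio $h$ with $h^{12Nn}\equiv 1$, then comparison of the leading $q$-order via (v) and of the first subleading coefficient --- is precisely the argument underlying the primitivity result in \cite{J-K-S}. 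Three small blemishes, none fatal: first, when $r\equiv 0\Mod{\mathbb{Z}}$ the coefficient you must compare sits at offset $1$, not at $q^{\min(r,1-r)}=q^0$ (it equals $-2\cos 2\pi s$ relative to the leading term, and its equality still yields $s\equiv\pm s'\Mod{\mathbb{Z}}$, with $s\notin\mathbb{Z}$ guaranteed since $\mathbf{v}\notin\mathbb{Z}^2$); second, for (ii) the paper's $\mathcal{F}_N$ is defined as a quotient $g/h$ of \emph{holomorphic} forms of equal weight, so you should add that $g_\mathbf{v}^{12N}=\mathfrak{k}_\mathbf{v}^{12N}\Delta^{N}$ has bounded order at every cusp and a suitable power of $\Delta=\eta^{24}$ clears the cusp poles; third, in (iii) the decomposition $\alpha=\mathrm{diag}(1,\nu)\beta$ is not unique, so strictly you should remark that the Galois action is already well defined by the theory recalled in $\S$\ref{sect2} and that $f^{\alpha\beta}=(f^\alpha)^\beta$ makes your generator computation suffice. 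With these points patched, your write-up is a sound, self-contained proof of facts the paper merely quotes.
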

\begin{proof}
See \cite[$\S$2.1]{K-L} and \cite[Example 3.1]{J-K-S}.
\end{proof}

\begin{remark}
So, for any nonzero integer $n$, $\left\{g_\mathbf{v}(\tau)^{12Nn}\right\}_{\mathbf{v}\in\mathcal{I}_N}$
is a Siegel family (when $g=1$) which is also called a Fricke family (\cite[p. 32--33]{K-L}).
Moreover, it is primitive.
\end{remark}

Now, let $g$ be a positive integer.
For $\mathbf{v}=\begin{bmatrix}v_1\\\vdots\\v_{2g}\end{bmatrix}\in\mathbb{R}^{2g}$, we denote by
\begin{equation*}
\mathbf{v}_u=\begin{bmatrix}v_1\\\vdots\\v_g\end{bmatrix},
\quad
\mathbf{v}_l=\begin{bmatrix}v_{g+1}\\\vdots\\v_{2g}\end{bmatrix},
\quad
\langle\mathbf{v}\rangle=
\begin{bmatrix}\langle v_1\rangle\\\vdots\\\langle v_{2g}\rangle\end{bmatrix}.
\end{equation*}
The theta constant $\theta_\mathbf{v}(Z)$
is defined by the following infinite series
\begin{equation*}
\theta_\mathbf{v}(Z)
=\sum_{\mathbf{n}\in\mathbb{Z}^g}
e\left(\frac{1}{2}(\mathbf{n}+\mathbf{v}_u)^TZ(\mathbf{n}+\mathbf{v}_u)
+(\mathbf{n}+\mathbf{v}_u)^T\mathbf{v}_l\right)\quad(Z\in\mathbb{H}_g).
\end{equation*}
Igusa (\cite[Theorem 2]{Igusa66}) showed that $\theta_\mathbf{v}(Z)$
is identically zero if and only if
$\langle\mathbf{v}\rangle$ belongs to the set
\begin{equation*}
S_-=\left\{\mathbf{a}\in\{0,1/2\}^{2g}~|~e(2\mathbf{a}_u^T\mathbf{a}_l)=-1\right\}.
\end{equation*}
Now, let
\begin{equation*}
S_+=\{0,1/2\}^{2g}\setminus S_-=
\left\{\mathbf{b}\in\{0,1/2\}^{2g}~|~e(2\mathbf{b}_u^T\mathbf{b}_l)=1\right\}.
\end{equation*}
One can then readily show that
\begin{equation}\label{order}
|S_-|=2^{g-1}(2^g-1)\quad\textrm{and}\quad
|S_+|=2^{g-1}(2^g+1).
\end{equation}
Recently, Koo et al. defined the function for $\mathbf{v}\in\mathcal{I}_N$
\begin{equation}\label{bigtheta}
\Theta_\mathbf{v}(Z)=
2^{4N}e\left(-2^gN(2^g-1)(2^g+1)\mathbf{v}_u^T\mathbf{v}_l\right)
\frac{\displaystyle \prod_{\mathbf{a}\in S_-}\theta_{\mathbf{a}-\mathbf{v}}(Z)^{4N(2^g+1)}}
{\displaystyle \prod_{\mathbf{b}\in S_+}
\theta_\mathbf{b}(Z)^{4N(2^g-1)}}\quad(Z\in\mathbb{H}_g)
\end{equation}
as a quotient of theta constants.

\begin{proposition}\label{bigthetaproperty}
We get the following properties of $\Theta_\mathbf{v}(Z)$:
\begin{enumerate}
\item[\textup{(i)}] $\Theta_\mathbf{v}(Z)$ depends only on $\pm\mathbf{v}\Mod{\mathbb{Z}^{2g}}$.
\item[\textup{(ii)}] It belongs to $\mathcal{F}_N$.
\item[\textup{(iii)}] For every $\alpha\in\mathrm{GSp}_{2g}(\mathbb{Z}/N\mathbb{Z})/\{\pm I_{2g}\}\simeq\mathrm{Gal}(\mathcal{F}_N/\mathcal{F}_1)$, it satisfies
$\Theta_\mathbf{v}(Z)^\alpha=\Theta_{\alpha^T\mathbf{v}}(Z)$.
\end{enumerate}
\end{proposition}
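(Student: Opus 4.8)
The plan is to reduce all three assertions to three classical transformation laws for the theta constant $\theta_\mathbf{v}$: (a) the integral translation law $\theta_{\mathbf{v}+\mathbf{m}}(Z)=e(\mathbf{v}_u^T\mathbf{m}_l)\theta_\mathbf{v}(Z)$ for $\mathbf{m}\in\mathbb{Z}^{2g}$ (proved at once by reindexing the defining sum via $\mathbf{n}\mapsto\mathbf{n}+\mathbf{m}_u$), (b) the reflection law $\theta_{-\mathbf{v}}(Z)=\theta_\mathbf{v}(Z)$ (reindexing via $\mathbf{n}\mapsto-\mathbf{n}$), and (c) Igusa's transformation formula under $\gamma=\left[\begin{smallmatrix}A&B\\C&D\end{smallmatrix}\right]\in\mathrm{Sp}_{2g}(\mathbb{Z})$, of the shape $\theta_\mathbf{v}(\gamma(Z))=\kappa(\gamma)\,e(\varphi_\mathbf{v}(\gamma))\det(CZ+D)^{1/2}\theta_{\gamma^T\mathbf{v}+\mathbf{s}(\gamma)}(Z)$, where $\kappa(\gamma)$ is an eighth root of unity independent of $\mathbf{v}$, $\mathbf{s}(\gamma)\in\tfrac12\mathbb{Z}^{2g}$ is a half-integral shift, and $\varphi_\mathbf{v}(\gamma)$ is an explicit quadratic phase in $\mathbf{v}$. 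Everything then reduces to bookkeeping of roots of unity, for which the key arithmetic inputs are $N\mathbf{v}\in\mathbb{Z}^{2g}$, the cardinalities $|S_\pm|$ in (\ref{order}), and the numerical identity $|S_-|(2^g+1)=|S_+|(2^g-1)=2^{g-1}(2^g-1)(2^g+1)$, which makes the numerator and denominator of $\Theta_\mathbf{v}$ contain the same number of theta factors.

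For (i), I first replace $\mathbf{v}$ by $-\mathbf{v}$. The prefactor $e(-2^gN(2^g-1)(2^g+1)\mathbf{v}_u^T\mathbf{v}_l)$ is unchanged, while for $\mathbf{a}\in S_-$ the identity $\mathbf{a}+\mathbf{v}=2\mathbf{a}-(\mathbf{a}-\mathbf{v})$ combined with (a), (b) and $e(2\mathbf{a}_u^T\mathbf{a}_l)=-1$ gives $\theta_{\mathbf{a}+\mathbf{v}}=-e(2\mathbf{v}_u^T\mathbf{a}_l)\theta_{\mathbf{a}-\mathbf{v}}$; raising to the even exponent $4N(2^g+1)$ removes the sign, and after taking the product over $S_-$ the residual phase $e(8N(2^g+1)\mathbf{v}_u^T\sum_{\mathbf{a}\in S_-}\mathbf{a}_l)$ is trivial because $N\mathbf{v}_u\in\mathbb{Z}^g$ and $2\mathbf{a}_l\in\mathbb{Z}^g$. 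Next I replace $\mathbf{v}$ by $\mathbf{v}+\mathbf{m}$ with $\mathbf{m}\in\mathbb{Z}^{2g}$ and apply (a) factorwise; the phases produced by the numerator and by the prefactor are each seen to be integers after invoking $N\mathbf{v}\in\mathbb{Z}^{2g}$ and the divisibility of the exponents $4N(2^g\pm1)$ and $2^gN(2^g-1)(2^g+1)$. Hence $\Theta_{-\mathbf{v}}=\Theta_{\mathbf{v}+\mathbf{m}}=\Theta_\mathbf{v}$, which is (i).

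For (ii), the matching of weights is immediate: each $\theta$ has weight $1/2$, and by the identity above both numerator and denominator are products of $2^{g+1}N(2^g-1)(2^g+1)$ theta factors, so $\Theta_\mathbf{v}$ is a weight-$0$ quotient of holomorphic Siegel modular forms. For invariance under $\Gamma(N)$ I apply (c) to every factor; since the number of factors in numerator and denominator agree, the characteristic-independent automorphy factors $\kappa(\gamma)$ and $\det(CZ+D)^{1/2}$ cancel entirely in the quotient. For $\gamma\in\Gamma(N)$ one has $\gamma^T\equiv I_{2g}\pmod{N}$, so $\gamma^T\mathbf{v}$ returns to $\mathbf{v}$ modulo $\mathbb{Z}^{2g}$; by part (i) and the translation law each factor is then carried to itself up to a root of unity, and the residual roots of unity, together with the phases $\varphi_\mathbf{v}(\gamma)$ and the prefactor, are forced to cancel by the choice of exponents. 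Finally one reads off from the Fourier expansion of each $\theta_\mathbf{v}$ that, after multiplication by $2^{4N}$ and the prefactor, all Fourier coefficients of $\Theta_\mathbf{v}$ lie in $\mathbb{Q}(\zeta_N)$; this last computation is the one carried out in \cite{K-R-S-Y}.

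The substance of the proposition is (iii), where I expect the main difficulty. Using the decomposition $\mathrm{GSp}_{2g}(\mathbb{Z}/N\mathbb{Z})/\{\pm I_{2g}\}\simeq G_N\cdot\mathrm{Sp}_{2g}(\mathbb{Z}/N\mathbb{Z})/\{\pm I_{2g}\}$, I treat the two types of generators separately. For $\widetilde{\gamma}\in\mathrm{Sp}_{2g}(\mathbb{Z}/N\mathbb{Z})/\{\pm I_{2g}\}$ the action is $\Theta_\mathbf{v}^{\widetilde\gamma}=\Theta_\mathbf{v}\circ\gamma$, and applying (c) factorwise the $\kappa(\gamma)$ and $\det(CZ+D)^{1/2}$ cancel as above, the linear part sends each characteristic to $\gamma^T\mathbf{v}+\mathbf{s}(\gamma)$, and the half-integral shift $\mathbf{s}(\gamma)$ together with the quadratic phase is absorbed by (i) and the prefactor, leaving exactly $\Theta_{\gamma^T\mathbf{v}}=\Theta_{\alpha^T\mathbf{v}}$. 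For a generator $\left[\begin{smallmatrix}I_g&O_g\\O_g&\nu I_g\end{smallmatrix}\right]$ of $G_N$ the action is by the cyclotomic automorphism $\sigma:\zeta_N\mapsto\zeta_N^\nu$ on Fourier coefficients; tracking the $\mathbf{v}$-dependence of the expansion of $\theta_\mathbf{v}$, in which $\mathbf{v}_l$ enters only through phases of the form $e(\ast\cdot\mathbf{v}_l)$ while $\mathbf{v}_u$ governs the $q$-exponents, one checks that $\sigma$ replaces $\mathbf{v}_l$ by $\nu\mathbf{v}_l$ (and acts compatibly on the prefactor), i.e. sends $\Theta_\mathbf{v}$ to $\Theta_{\alpha^T\mathbf{v}}$ with $\alpha^T=\left[\begin{smallmatrix}I_g&O_g\\O_g&\nu I_g\end{smallmatrix}\right]$. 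Composing the two cases yields (iii). The hard part throughout is verifying that the eighth-root-of-unity and quadratic-phase factors produced by (c) cancel across the product over $S_\pm$; this is precisely what the balanced exponents $4N(2^g\pm1)$, the sets $S_\pm$, and the normalizing prefactor $2^{4N}e(-2^gN(2^g-1)(2^g+1)\mathbf{v}_u^T\mathbf{v}_l)$ were engineered to guarantee, and it is this bookkeeping for which I would rely on \cite{K-R-S-Y}.
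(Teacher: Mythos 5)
Your proposal is correct in outline and follows essentially the same route as the paper, which gives no argument of its own here but simply cites \cite{K-R-S-Y}: your reduction to the integral translation and reflection laws plus Igusa's transformation formula, with the balanced exponents $4N(2^g+1)$, $4N(2^g-1)$ and the normalizing prefactor absorbing all roots of unity, is precisely the argument carried out there, and you defer the same detailed bookkeeping to the same reference. Nothing further is needed.
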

\begin{proof}
See \cite[Lemma 4.4 and Proposition 4.5]{K-R-S-Y}.
\end{proof}

\begin{remark}
\begin{enumerate}
\item[(i)] $\{\Theta_\mathbf{v}(Z)\}_{\mathbf{v}\in\mathcal{I}_N}$ becomes a Siegel family
satisfying (S1) and (S2).
\item[(ii)] If $g=1$, 
then one can obtain by using Jacobi's triple product identity that
\begin{equation*}
\Theta_{\mathbf{v}}(\tau)=
g_{\mathbf{v}}(\tau)^{12N}\quad(\tau\in\mathbb{H}_1)
\end{equation*}
(\cite[Remark 4.3]{K-R-S-Y}).
This shows that $\Theta_\mathbf{v}$ is a multivariable generalization of the Siegel function $g_\mathbf{v}$.
\item[(iii)] One can verify that $\Theta_\mathbf{v}(Z)$ becomes identically zero when $N=2$.
\end{enumerate}
\end{remark}

For $\tau_1,\ldots,\tau_g\in\mathbb{H}_1$, by $\mathrm{diag}(\tau_1,\ldots,\tau_g)$ we mean
the $g\times g$ diagonal matrix with diagonal entries $\tau_1,\ldots,\tau_g$, that is
\begin{equation*}
\mathrm{diag}(\tau_1,\ldots,\tau_g)=\begin{bmatrix}\tau_1 & \cdots & 0\\
\vdots & \ddots  & \vdots\\
0 & \cdots & \tau_g\end{bmatrix}.
\end{equation*}
Note that $\mathrm{diag}(\tau_1,\ldots,\tau_g)$ belongs to $\mathbb{H}_g$.

\begin{lemma}\label{diagproduct}
We have the relation
\begin{align*}
&\theta_\mathbf{v}\left(\mathrm{diag}(\tau_1,\ldots,\tau_g)\right)
\quad(\tau_1,\ldots,\tau_g\in\mathbb{H}_1)\\
&=\left\{\begin{array}{ll}
\displaystyle\prod_{k=1}^g\xi_k
g_{\left[\begin{smallmatrix}1/2-v_k\\1/2-v_{k+g}\end{smallmatrix}\right]}(\tau_k)
g_{\left[\begin{smallmatrix}1/2\\1/2\end{smallmatrix}\right]}(\tau_k)^{-1}
\theta_{\left[\begin{smallmatrix}0\\0\end{smallmatrix}\right]}(\tau_k)
& \textrm{if}~\begin{bmatrix}\langle v_k\rangle\\
\langle v_{k+g}\rangle\end{bmatrix}\neq\begin{bmatrix}1/2\\1/2\end{bmatrix}~
\textrm{for all}~k=1,\ldots,g,\\
0 & \textrm{otherwise},\\
\end{array}\right.
\end{align*}
where $\xi_k=e((2v_kv_{k+g}+v_k-v_{k+g})/4)$.
\end{lemma}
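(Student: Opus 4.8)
The plan is to exploit the diagonal structure of $Z$ to split $\theta_\mathbf{v}$ into a product of one-variable theta constants, and then to convert each one-variable factor into Siegel functions by Jacobi's triple product identity. Writing $Z=\mathrm{diag}(\tau_1,\ldots,\tau_g)$ and $\mathbf{n}=[n_1,\ldots,n_g]^T$, the two forms in the exponent of $\theta_\mathbf{v}$ separate across coordinates, since $(\mathbf{n}+\mathbf{v}_u)^TZ(\mathbf{n}+\mathbf{v}_u)=\sum_{k=1}^g\tau_k(n_k+v_k)^2$ and $(\mathbf{n}+\mathbf{v}_u)^T\mathbf{v}_l=\sum_{k=1}^g(n_k+v_k)v_{k+g}$. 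Hence the series over $\mathbf{n}\in\mathbb{Z}^g$ factors as $\prod_{k=1}^g\theta_{[v_k,\,v_{k+g}]^T}(\tau_k)$, and the whole statement reduces to a one-variable identity applied coordinatewise.

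For the vanishing alternative, I would first record that $\theta_{[1/2,\,1/2]^T}\equiv 0$: grouping the $n$-th and $(-n-1)$-th terms of the defining series, each pair cancels because the two linear phases are conjugate and sum to $2\cos(\pi(n+1/2))=0$ (this is the genus-one case of Igusa's criterion, as $S_-=\{[1/2,1/2]^T\}$). Next, shifting a characteristic by an integer vector only multiplies the theta constant by a root of unity, so $\theta_{[r,\,s]^T}$ vanishes identically whenever $\langle r\rangle=\langle s\rangle=1/2$. Consequently, if $[\langle v_k\rangle,\langle v_{k+g}\rangle]^T=[1/2,1/2]^T$ for some $k$, the corresponding factor is $0$ and the product vanishes, which is precisely the ``otherwise'' case.

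In the remaining case I would start from $\theta_{[r,\,s]^T}(\tau)=q^{r^2/2}e(rs)\sum_{n\in\mathbb{Z}}q^{n^2/2}\bigl(q^{-r}e(-s)\bigr)^n$, using the evenness $\theta_{[r,s]^T}=\theta_{[-r,-s]^T}$ to arrange the sign needed below, and apply Jacobi's triple product to obtain $q^{r^2/2}e(rs)\prod_{m\ge1}(1-q^m)(1+q^{m-1/2-r}e(-s))(1+q^{m-1/2+r}e(s))$. The crucial matching step is to turn the $(1+\cdots)$ factors into the $(1-\cdots)$ factors appearing in $g_\mathbf{v}$: writing $1+x=1-e(1/2)x$ and setting $a=1/2-r$, $b=1/2-s$, one checks $e(s+1/2)=e(-b)$ and identifies the double product with the product part of $g_{[1/2-r,\,1/2-s]^T}(\tau)$ as normalized in \eqref{Siegel}. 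Running the same computation at $r=s=0$ then expresses the Euler factor $\prod_{m\ge1}(1-q^m)$ through $\theta_{[0,\,0]^T}(\tau)$ and $g_{[1/2,\,1/2]^T}(\tau)$.

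Substituting these two identities into the factorization yields $\theta_{[r,\,s]^T}(\tau)=\xi\,g_{[1/2-r,\,1/2-s]^T}(\tau)\,g_{[1/2,\,1/2]^T}(\tau)^{-1}\,\theta_{[0,\,0]^T}(\tau)$ for an explicit constant $\xi$, and it remains to verify $\xi=e((2rs+r-s)/4)$. This is where the main care is required: one must confirm that all powers of $q$ cancel, which follows from $\mathbf{B}_2(1/2-r)=r^2-1/12$ combined with the leading $q^{r^2/2}$, and that the accumulated roots of unity from the three Siegel/theta normalizations collapse to exactly $\xi$. I expect this prefactor bookkeeping --- tracking the signs from $e(1/2)=-1$, the Bernoulli normalizations $e(b(a-1)/2)$, and the leading $e(rs)$ consistently --- to be the only delicate part, since all the structural content is already contained in the coordinatewise factorization and the triple product identity.
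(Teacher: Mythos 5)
Your proposal is correct, and it supplies an actual derivation where the paper gives none: the paper's ``proof'' of this lemma is only a citation to \cite[Example 4.3]{E-K-S}, and the route you take (splitting the series over the diagonal matrix into a product of one-variable theta constants, then converting each factor via Jacobi's triple product) is the standard argument that the cited example carries out. The coordinatewise factorization $\theta_\mathbf{v}(\mathrm{diag}(\tau_1,\ldots,\tau_g))=\prod_k\theta_{[v_k,\,v_{k+g}]^T}(\tau_k)$ is immediate since both quadratic forms in the exponent are diagonal, and your treatment of the vanishing case (pairing $n$ with $-n-1$ for the characteristic $[1/2,1/2]^T$, then shifting by integer characteristics at the cost of a root of unity) is sound. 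The one step you leave as ``bookkeeping'' does close: with $a=1/2-r$, $b=1/2-s$ the three product factors of $g_{[a,b]^T}$ match $\prod_{m\ge1}(1+q^{m-1/2-r}e(-s))(1+q^{m-1/2+r}e(s))$ exactly (the $n=0$ factor absorbing into the first infinite product), the exponent identity $\mathbf{B}_2(1/2-r)=r^2-1/12$ makes the residual power of $q$ equal to $q^{1/24}$, which cancels against the $q^{-1/24}$ coming from eliminating $\prod_{m\ge1}(1-q^m)$ via $\theta_{[0,0]^T}/g_{[1/2,1/2]^T}$, and the accumulated roots of unity $e(rs)\,e(-(1/2-s)(-1/2-r)/2)\,e(-1/8)$ simplify to $e((2rs+r-s)/4)=\xi_k$, with the two leading minus signs of the Siegel functions cancelling. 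So the proof is complete once those computations are written out.
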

\begin{proof}
See \cite[Example 4.3]{E-K-S}.
\end{proof}

For each $k=1,\ldots,2g$, let
\begin{equation*}
n_{k,0}=\left|\{\mathbf{a}\in S_-~|~a_k=0\}\right|\quad\textrm{and}\quad
n_{k,1/2}=\left|\{\mathbf{a}\in S_-~|~a_k=1/2\}\right|.
\end{equation*}
One can readily see from (\ref{order}) that
\begin{equation}\label{nk0}
n_{k,0}=2^{2g-2}-2^{g-1}\quad\textrm{and}\quad
n_{k,1/2}=2^{2g-2}\quad(k=1,\ldots,2g).
\end{equation}
Since these values do not depend on $k$, we simply write
$n_0$ and $n_{1/2}$ in place of $n_{k,0}$ and $n_{k,1/2}$, respectively.

\begin{lemma}\label{orderlemma}
Let $g\geq2$ and $n$ be a nonzero integer, and let $\mathbf{v}=
\begin{bmatrix}v_1\\\vdots\\v_{2g}\end{bmatrix},
\mathbf{v}'=\begin{bmatrix}v_1'\\\vdots\\v_{2g}'\end{bmatrix}
\in\mathcal{I}_N$.
Assume that $\Theta_\mathbf{v}(Z)^n=\Theta_{\mathbf{v}'}(Z)^n$ and
$\begin{bmatrix}\langle v_k\rangle\\
\langle v_{k+g}\rangle\end{bmatrix}\not\in\{0,1/2\}^2$ for all  $k=1,\ldots,g$.
Then we attain
$\begin{bmatrix}\langle v_k'\rangle\\
\langle v_{k+g}'\rangle\end{bmatrix}\not\in\{0,1/2\}^2$
for all $k=1,\ldots,g$
and
\begin{equation}\label{orderformula}
n_0\mathbf{B}_2(\langle1/2+v_k\rangle)+n_{1/2}\mathbf{B}_2(\langle v_k\rangle)
=n_0\mathbf{B}_2(\langle1/2+v_k'\rangle)+n_{1/2}\mathbf{B}_2(\langle v_k'\rangle)\quad\textrm{for each}~k=1,\ldots,2g.
\end{equation}
\end{lemma}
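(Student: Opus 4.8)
The plan is to reduce the multivariable identity $\Theta_\mathbf{v}(Z)^n=\Theta_{\mathbf{v}'}(Z)^n$ to a one-variable order computation along the diagonal locus, and then to read off (\ref{orderformula}) by comparing leading $q$-exponents coordinate by coordinate. The first observation I would exploit is that in (\ref{bigtheta}) the denominator $\prod_{\mathbf{b}\in S_+}\theta_\mathbf{b}(Z)^{4N(2^g-1)}$ and the scalar $2^{4N}$ are independent of $\mathbf{v}$, while the exponential prefactor is a nonzero constant. Hence, after raising to the $n$th power and cancelling these common factors, the hypothesis collapses to an identity of the form
\begin{equation*}
C_\mathbf{v}\prod_{\mathbf{a}\in S_-}\theta_{\mathbf{a}-\mathbf{v}}(Z)^{4Nn(2^g+1)}
=C_{\mathbf{v}'}\prod_{\mathbf{a}\in S_-}\theta_{\mathbf{a}-\mathbf{v}'}(Z)^{4Nn(2^g+1)}
\end{equation*}
with $C_\mathbf{v},C_{\mathbf{v}'}$ nonzero constants.

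Next I would restrict both sides to $Z=\mathrm{diag}(\tau_1,\dots,\tau_g)$ and invoke Lemma \ref{diagproduct}. Under the standing assumption $(\langle v_k\rangle,\langle v_{k+g}\rangle)\notin\{0,1/2\}^2$, a short check shows that for every $\mathbf{a}\in S_-$ and every $k$ one has $(\langle a_k-v_k\rangle,\langle a_{k+g}-v_{k+g}\rangle)\neq(1/2,1/2)$, since equality would force $\langle v_k\rangle,\langle v_{k+g}\rangle\in\{0,1/2\}$. Thus each $\theta_{\mathbf{a}-\mathbf{v}}(\mathrm{diag}(\cdots))$ is a nonzero function, the left-hand side does not vanish identically, and therefore neither does the right-hand side; this forces each $\theta_{\mathbf{a}-\mathbf{v}'}(\mathrm{diag}(\cdots))$ to be nonzero as well. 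To deduce $(\langle v_k'\rangle,\langle v_{k+g}'\rangle)\notin\{0,1/2\}^2$ I would argue by contraposition: if this pair lay in $\{0,1/2\}^2$ for some $k$, I can choose $a_k,a_{k+g}\in\{0,1/2\}$ with $(\langle a_k-v_k'\rangle,\langle a_{k+g}-v_{k+g}'\rangle)=(1/2,1/2)$, and then—using $g\ge2$—adjust the remaining coordinates to make the number of $(1/2,1/2)$-pairs odd, producing an $\mathbf{a}\in S_-$ with $\theta_{\mathbf{a}-\mathbf{v}'}(\mathrm{diag}(\cdots))\equiv0$, a contradiction. This is where $g\ge2$ is essential, as with a single coordinate one cannot correct the parity defining $S_-$; I expect this non-vanishing/parity step to be the main obstacle, being the only place where $S_-$, the product structure, and the hypothesis $g\ge2$ must be combined carefully.

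With non-vanishing secured, I would compare $\mathrm{ord}_{q_k}$ of the two sides for $k=1,\dots,g$. Because Lemma \ref{diagproduct} factors each $\theta_{\mathbf{a}-\mathbf{v}}(\mathrm{diag}(\cdots))$ as a product of one-variable functions, one in each $\tau_{k'}$, the $q_k$-order of the whole product is the sum over $\mathbf{a}\in S_-$ of the $q_k$-order of the $k$th factor; this order comes entirely from $g_{\left[\begin{smallmatrix}1/2-w_k\\1/2-w_{k+g}\end{smallmatrix}\right]}(\tau_k)$ with $\mathbf{w}=\mathbf{a}-\mathbf{v}$, equal to $(1/2)\mathbf{B}_2(\langle1/2-w_k\rangle)$ by Proposition \ref{Siegelproperty}(v), the factors $g_{\left[\begin{smallmatrix}1/2\\1/2\end{smallmatrix}\right]}(\tau_k)^{-1}$ and $\theta_{\left[\begin{smallmatrix}0\\0\end{smallmatrix}\right]}(\tau_k)$ contributing orders independent of $\mathbf{v}$. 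Since $1/2-(a_k-v_k)$ reduces to $\langle1/2+v_k\rangle$ when $a_k=0$ and to $\langle v_k\rangle$ when $a_k=1/2$, summing over $\mathbf{a}\in S_-$ and using the counts $n_0,n_{1/2}$ of (\ref{nk0}) turns $\mathrm{ord}_{q_k}$ into $2Nn(2^g+1)\bigl(n_0\mathbf{B}_2(\langle1/2+v_k\rangle)+n_{1/2}\mathbf{B}_2(\langle v_k\rangle)\bigr)$ up to a common constant. Equating the orders of the two equal functions and cancelling the nonzero factor $2Nn(2^g+1)$ yields (\ref{orderformula}) for $k=1,\dots,g$. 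Finally, to reach the indices $k=g+1,\dots,2g$ I would apply the automorphism attached to $J=\left[\begin{smallmatrix}O_g&-I_g\\I_g&O_g\end{smallmatrix}\right]\in\mathrm{Sp}_{2g}(\mathbb{Z})$: by Proposition \ref{bigthetaproperty}(iii) it sends the hypothesis to $\Theta_{J^T\mathbf{v}}(Z)^n=\Theta_{J^T\mathbf{v}'}(Z)^n$, where $J^T\mathbf{v}=\left[\begin{smallmatrix}\mathbf{v}_l\\-\mathbf{v}_u\end{smallmatrix}\right]$ again satisfies the coordinate hypothesis (as $\langle-x\rangle\in\{0,1/2\}$ iff $\langle x\rangle\in\{0,1/2\}$), so the previous argument applied to $J^T\mathbf{v},J^T\mathbf{v}'$ produces (\ref{orderformula}) for the lower block and completes the proof.
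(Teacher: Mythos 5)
Your proposal is correct and follows essentially the same route as the paper: restrict to $Z=\mathrm{diag}(\tau_1,\ldots,\tau_g)$, factor via Lemma \ref{diagproduct} into one-variable Siegel functions, rule out $(\langle v_k'\rangle,\langle v_{k+g}'\rangle)\in\{0,1/2\}^2$ by producing an $\mathbf{a}\in S_-$ with a prescribed $k$th pair (your parity count on $(1/2,1/2)$-pairs is exactly the content of the paper's unproved existence claim, and is where $g\ge2$ enters), compare $q_k$-orders via Proposition \ref{Siegelproperty}(v) using the counts $n_0,n_{1/2}$, and transfer to the lower block by acting with the symplectic swap $\left[\begin{smallmatrix}O_g&I_g\\-I_g&O_g\end{smallmatrix}\right]^T$. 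No gaps.
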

\begin{proof}
Since $\Theta_\mathbf{v}(Z)^n=\Theta_{\mathbf{v}'}(Z)^n$, we get
\begin{equation*}
\Theta_\mathbf{v}(\mathrm{diag}(\tau_1,\ldots,\tau_g))^n=
\Theta_{\mathbf{v}'}(\mathrm{diag}(\tau_1,\ldots,\tau_g))^n\quad
(\tau_1,\ldots,\tau_g\in\mathbb{H}_1).
\end{equation*}
It then follows from the definition (\ref{bigtheta}) that
\begin{equation*}
\prod_{\mathbf{a}\in S_-}\theta_{\mathbf{a}-\mathbf{v}}(\mathrm{diag}(\tau_1,\ldots,\tau_g))
^{4N(2^g+1)n}\doteq
\prod_{\mathbf{a}\in S_-}\theta_{\mathbf{a}-\mathbf{v}'}(\mathrm{diag}(\tau_1,\ldots,\tau_g))
^{4N(2^g+1)n},
\end{equation*}
where $\doteq$ stands for the equality up to a root of unity.
Here we note that if $g\geq 2$ then for any $\mathbf{x}\in \{0,1/2\}^2$ and $k=1,\ldots,g$, there exists $\mathbf{a}=
\begin{bmatrix}a_1\\\vdots\\a_{2g}\end{bmatrix}\in S_-$ such that $\begin{bmatrix}\langle a_k\rangle\\
\langle a_{k+g}\rangle\end{bmatrix}=\mathbf{x}$.
Hence we know by Lemma \ref{diagproduct} that
\begin{equation*}
\begin{bmatrix}\langle v_k'\rangle\\
\langle v_{k+g}'\rangle\end{bmatrix}\not\in\{0,1/2\}^2 \textrm{~for all $k=1,\ldots,g$}
\end{equation*}
  and
\begin{equation*}
\prod_{\mathbf{a}\in S_-}\prod_{k=1}^g g_{\left[\begin{smallmatrix}1/2-a_k+v_k\\
1/2-a_{k+g}+v_{k+g}\end{smallmatrix}\right]}(\tau_k)^{4N(2^g+1)n}\doteq
\prod_{\mathbf{a}\in S_-}\prod_{k=1}^g g_{\left[\begin{smallmatrix}1/2-a_k+v_k'\\
1/2-a_{k+g}+v_{k+g}'\end{smallmatrix}\right]}(\tau_k)^{4N(2^g+1)n}.
\end{equation*}
Comparing the orders with respect to $e(\tau_k)$ by using Proposition \ref{Siegelproperty} (v),
we obtain 
\begin{equation}\label{order1}
\sum_{\mathbf{a}\in S_-}\mathbf{B}_2(\langle1/2-a_k+v_k\rangle)
=\sum_{\mathbf{a}\in S_-}\mathbf{B}_2(\langle1/2-a_k+v_k'\rangle)
\quad\textrm{for each}~k=1,\ldots,g.
\end{equation}
\par
On the other hand, acting $\begin{bmatrix}O_g&I_g\\-I_g&O_g\end{bmatrix}^T\in\mathrm{Sp}_{2g}(\mathbb{Z})$
    on both sides of $\Theta_\mathbf{v}(Z)^n=\Theta_{\mathbf{v}'}(Z)^n$, we
    have by Proposition \ref{bigthetaproperty} (iii) that
\begin{equation*}
\Theta_{\left[\begin{smallmatrix}
\mathbf{v}_l\\-\mathbf{v}_u
\end{smallmatrix}\right]}(Z)^n=
\Theta_{\left[\begin{smallmatrix}
\mathbf{v}_l'\\-\mathbf{v}_u'
\end{smallmatrix}\right]}(Z)^n.
\end{equation*}
In exactly the same way as the first part of this proof, one can also achieve
\begin{equation}\label{order2}
\sum_{\mathbf{a}\in S_-}\mathbf{B}_2(\langle1/2-a_k+v_{k+g}\rangle)
=\sum_{\mathbf{a}\in S_-}\mathbf{B}_2(\langle1/2-a_k+v_{k+g}'\rangle)\quad
\textrm{for each}~k=1,\ldots,g.
\end{equation}
Now, (\ref{order1}) and (\ref{order2}) yield the formula (\ref{orderformula}).
\end{proof}

\section {Primitivity of the Siegel family $\{\Theta_\mathbf{v}(Z)\}_{\mathbf{v}\in\mathcal{I}_N}$}

Assume that
\begin{equation}\label{mainassumption}
g\geq2,~N\neq1,2,4~\textrm{and}~(2^g-1)\nmid N.
\end{equation}
In this section we shall prove that the Siegel family 
$\{\Theta_\mathbf{v}(Z)^n\}_{\mathbf{v}\in\mathcal{I}_N}$ 
is primitive for every nonzero integer $n$. 

\begin{lemma}\label{mainlemma1}
With the assumption \textup{(\ref{mainassumption})}, let $n$ be any nonzero integer, and let $\mathbf{v},\mathbf{v}'\in\mathcal{I}_N$ such that
$\begin{bmatrix}\langle v_k\rangle\\
\langle v_{k+g}\rangle\end{bmatrix}\not\in\{0,1/2\}^2$ for all $k=1,\ldots,g$.
If $\Theta_\mathbf{v}(Z)^n=\Theta_{\mathbf{v}'}(Z)^n$, then we have $\mathbf{v}\equiv\pm\mathbf{v}'\Mod{\mathbb{Z}^{2g}}$.
\end{lemma}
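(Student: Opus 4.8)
The plan is to use only the coarse data that Lemma \ref{orderlemma} already extracts from $\Theta_\mathbf{v}(Z)^n=\Theta_{\mathbf{v}'}(Z)^n$, namely the order identity (\ref{orderformula}) for every $k=1,\dots,2g$, to pin down each coordinate of $\mathbf{v}'$ up to a sign, and then to promote these $2g$ individual signs to one global sign. First I would package (\ref{orderformula}) into a single real function: for $x\in[0,1)$ put
\[
F(x)=n_0\,\mathbf{B}_2(\langle 1/2+x\rangle)+n_{1/2}\,\mathbf{B}_2(\langle x\rangle),
\]
so that Lemma \ref{orderlemma} reads $F(\langle v_k\rangle)=F(\langle v_k'\rangle)$ for each $k$. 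Since $\mathbf{B}_2$ is quadratic, $F$ is piecewise quadratic on $[0,1/2]$ and $[1/2,1)$ with the common leading coefficient $A=n_0+n_{1/2}=2^{g-1}(2^g-1)$, and a direct computation with $n_0=2^{2g-2}-2^{g-1}$, $n_{1/2}=2^{2g-2}$ gives the symmetry $F(x)=F(1-x)$ together with a single minimum on $[0,1/2]$ at $x_0=2^{g-2}/(2^g-1)$. Thus the graph of $F$ is a ``$W$'': symmetric about $1/2$, strictly monotone on each quarter. Solving $F(x)=F(x')$ for $x,x'\in[0,1)$ therefore produces, besides the expected $x'=x$ and $x'=1-x$, only the two reflected possibilities
\[
x+x'=2x_0=\frac{2^{g-1}}{2^g-1}\qquad\text{or}\qquad x'-x=\pm(1-2x_0)=\pm\frac{2^{g-1}-1}{2^g-1}.
\]

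Next I would discard the reflected solutions arithmetically. Because $\mathbf{v},\mathbf{v}'\in\mathcal{I}_N$ we have $\langle v_k\rangle,\langle v_k'\rangle\in(1/N)\mathbb{Z}$, so the two reflected identities would force $(2^g-1)\mid 2^{g-1}N$ and $(2^g-1)\mid(2^{g-1}-1)N$ respectively; as $2^g-1$ is odd and coprime to both $2^{g-1}$ and $2^{g-1}-1$, either would give $(2^g-1)\mid N$, contradicting (\ref{mainassumption}). Hence for every $k$ only $\langle v_k'\rangle=\langle v_k\rangle$ or $\langle v_k'\rangle=1-\langle v_k\rangle$ survives, i.e. $v_k'\equiv\varepsilon_k v_k\Mod{\mathbb{Z}}$ for some $\varepsilon_k\in\{\pm1\}$, the sign being forced (hence unambiguous) precisely when $\langle v_k\rangle\notin\{0,1/2\}$.

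The main obstacle is to choose these signs \emph{simultaneously}, i.e. to obtain $\mathbf{v}'\equiv\pm\mathbf{v}\Mod{\mathbb{Z}^{2g}}$ with one global sign; the diagonal substitution cannot see this, as it decouples the $g$ symplectic planes. My plan is to feed translated data back into the order analysis. Given two indices $i\neq j$ with $\langle v_i\rangle,\langle v_j\rangle\notin\{0,1/2\}$ (``genuine'' coordinates), I would pick an elementary symplectic transvection $\alpha\in\mathrm{Sp}_{2g}(\mathbb{Z})$ producing a coordinate congruent to $v_i+v_j$; by Proposition \ref{bigthetaproperty} (iii) the hypothesis transports to $\Theta_{\alpha^T\mathbf{v}}(Z)^n=\Theta_{\alpha^T\mathbf{v}'}(Z)^n$, and rerunning the steps above on that new coordinate yields
\[
v_i+v_j\equiv\pm(v_i'+v_j')\equiv\pm(\varepsilon_i v_i+\varepsilon_j v_j)\Mod{\mathbb{Z}}.
\]
Were $\varepsilon_i\neq\varepsilon_j$, this would force $2v_i\equiv0$ or $2v_j\equiv0$, contradicting genuineness; hence $\varepsilon_i=\varepsilon_j$. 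Since each symplectic plane carries a genuine coordinate by the standing hypothesis $\big[\langle v_k\rangle\ \langle v_{k+g}\rangle\big]^{T}\notin\{0,1/2\}^2$, running over enough pairs connects all genuine indices and delivers the single sign.

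The delicate point of this last step — and where the remaining hypotheses $N\neq2,4$ are spent — is to arrange the transvection $\alpha$ so that $\alpha^T\mathbf{v}$ still meets the upper-block hypothesis of Lemma \ref{orderlemma} (so the order analysis can be reapplied) while the coupled coordinate $v_i+v_j$ stays usable. For small $N$ the coordinates take too few values to guarantee this, whereas for $N\neq2,4$ there is enough room to couple any two genuine coordinates. I expect the bookkeeping of these symplectic translates, rather than the one-variable study of $F$, to be the technical heart of the argument.
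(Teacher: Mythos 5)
Your first half is sound and matches the paper's computation in substance: packaging (\ref{orderformula}) into the piecewise--quadratic $F$ with $F(x)=F(1-x)$ and interior minima at $x_0=2^{g-2}/(2^g-1)$ and $1-x_0$, and then discarding the reflected solutions because they would force $(2^g-1)\mid N$, is exactly what the paper does in its four cases (i)--(iv) with $V_k=Nv_k$. Your globalization strategy (transvect to create a coordinate $\equiv v_i\pm v_j$, rerun the order analysis, and note that $\varepsilon_i\neq\varepsilon_j$ would force $2v_i\equiv 0$ or $2v_j\equiv 0$) is also the paper's strategy.

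The gap is precisely in the step you flag as delicate, and the resolution you propose for it is false. It is not true that for $N\neq 2,4$ one can couple \emph{any} two genuine coordinates: take $g=2$, $N=12$ and $(v_1,v_2,v_3,v_4)=(1/4,1/4,0,1/3)$. Here $\mathbf{v}\in\mathcal{I}_{12}$, the standing hypothesis holds, and $v_1,v_2$ are genuine, but $v_1+v_2\equiv 1/2$ and $v_1-v_2\equiv 0$ while the partner $v_3=0$ is non-genuine; after either transvection the first symplectic pair lands in $\{0,1/2\}^2$, so Lemma \ref{orderlemma} cannot be reapplied and the relation $v_1+v_2\equiv\pm(v_1'+v_2')$ is not available. ``Each symplectic plane carries a genuine coordinate'' does not give connectivity of the couplable pairs. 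The missing idea is the paper's anchor: since the exact denominator of $\mathbf{v}$ is $N\geq 3$, $N\neq 4$, some single coordinate $v_j$ has exact denominator $\geq 3$ and $\neq 4$ --- this, not ``room to couple arbitrary pairs,'' is where $N\neq 1,2,4$ is spent. That coordinate satisfies $v_j\not\equiv -v_j$ (so $\varepsilon_j$ is unambiguous and breaks the outer $\pm$ in the sixteen cases); for \emph{every} other index $i$ at least one of $\langle v_i\pm v_j\rangle$ avoids $\{0,1/2\}$ (otherwise $v_j\in\{0,1/4,1/2,3/4\}$); and the transvections (C1)--(C4) leave $v_j$ itself unchanged, so the transformed vector automatically satisfies the hypothesis of Lemma \ref{orderlemma}. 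Coupling every index to this one fixed $j$ (after moving it to the upper block with $\left[\begin{smallmatrix}O_g&I_g\\-I_g&O_g\end{smallmatrix}\right]$ if necessary), rather than coupling genuine coordinates to each other, is what actually delivers the single global sign; in the example above the anchor is $v_4=1/3$ and indices $1,2$ are connected only through it.
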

\begin{proof}
We may assume by Proposition \ref{bigthetaproperty} (i) that
\begin{equation}\label{vk<1}
0\leq v_k,v_k'<1\quad(k=1,\ldots,2g).
\end{equation}
For simplicity, let
\begin{equation*}
V_k=Nv_k\quad\textrm{and}\quad V_k'=Nv_k'\quad(k=1,\ldots,2g).
\end{equation*}
First, we shall show that $V_k\equiv\pm V_k'\Mod{N}$ for each $k=1,\ldots,2g$
via the following four steps.
\begin{enumerate}
\item[(i)] Let $k$ be an index ($1\leq k\leq 2g$) such that
$0\leq v_k, v_k'<1/2$.
Since we are assuming $\Theta_\mathbf{v}(Z)^n=\Theta_{\mathbf{v}'}(Z)^n$,
we get by Lemma \ref{orderlemma} and (\ref{vk<1}) 
\begin{equation*}
n_0\mathbf{B}_2(1/2+v_k)+n_{1/2}\mathbf{B}_2(v_k)
=n_0\mathbf{B}_2(1/2+v_k')+n_{1/2}\mathbf{B}_2(v_k').
\end{equation*}
By multiplying both sides by $4N^2$ we establish
\begin{align*}
&n_0\left\{(N+2V_k)^2-2N(N+2V_k)+2N^2/3\right\}+n_{1/2}(4V_k^2-4NV_k+2N^2/3)\\
&=n_0\left\{(N+2V_k')^2-2N(N+2V_k')+2N^2/3\right\}+n_{1/2}(4V_k'^2-4NV_k+2N^2/3),
\end{align*}
from which we attain
\begin{equation*}
(V_k-V_k')\left\{(n_0+n_{1/2})(V_k+V_k')-n_{1/2}N\right\}=0.
\end{equation*}
If $V_k\neq V_k'$, then we achieve by (\ref{nk0}) 
\begin{equation*}
V_k+V_k'=\frac{n_{1/2}}{n_0+n_{1/2}}N=\frac{2^{g-1}}{2^g-1}N,
\end{equation*}
which is not an integer by the assumption $(2^g-1)\nmid N$. Thus we must have
$V_k=V_k'$.
\item[(ii)] Let $k$ be an index ($1\leq k\leq 2g$) such that
$0\leq v_k<1/2$ and $1/2\leq v_k'<1$. 
We have by Lemma \ref{orderlemma} and (\ref{vk<1}) 
\begin{equation*}
n_0\mathbf{B}_2(1/2+v_k)+n_{1/2}\mathbf{B}_2(v_k)
=n_0\mathbf{B}_2(-1/2+v_k')+n_{1/2}\mathbf{B}_2(v_k').
\end{equation*}
We then see that
\begin{equation*}
(V_k+V_k'-N)\left\{(n_0+n_{1/2})(V_k-V_k')+n_0N\right\}=0.
\end{equation*}
If $V_k\neq N-V_k'$, then we derive by (\ref{nk0}) 
\begin{equation*}
V_k-V_k'=-\frac{n_0}{n_0+n_{1/2}}N=-\frac{(2^{g-1}-1)}{2^g-1}N,
\end{equation*}
which is not an integer again by the assumption $(2^g-1)\nmid N$. 
Hence we should have
$V_k=N-V_k'$.
\item[(iii)] Let $k$ be an index ($1\leq k\leq 2g$) such that
$1/2\leq v_k<1$ and $0\leq v_k'<1/2$.
In a similar way to (ii), one can also show that
$V_k=N-V_k'$.
\item[(iv)] Let $k$ be an integer ($1\leq k\leq 2g$) such that
$1/2\leq v_k,v_k'<1$. 
We deduce by Lemma \ref{orderlemma} and (\ref{vk<1}) 
\begin{equation*}
n_0\mathbf{B}_2(-1/2+v_k)+n_{1/2}\mathbf{B}_2(v_k)
=n_0\mathbf{B}_2(-1/2+v_k')+n_{1/2}\mathbf{B}_2(v_k').
\end{equation*}
And, we get
\begin{equation*}
(V_k-V_k')\left\{(n_0+n_{1/2})(V_k+V_k')-(2n_0+n_{1/2})N\right\}=0.
\end{equation*}
If $V_k\neq V_k'$, then it follows from (\ref{nk0}) that
\begin{equation*}
V_k+V_k'=\frac{2n_0+n_{1/2}}{n_0+n_{1/2}}N=N+\frac{2^{g-1}-1}{2^g-1}N
\end{equation*}
which is not an integer by the assumption $(2^g-1)\nmid N$. 
Therefore we are forced to have $V_k=V_k'$.
\end{enumerate}
\par
Second, we shall justify that $V_k\equiv V_k'\Mod{N}$ for all $k=1,\ldots,2g$, or $V_k\equiv
-V_k'\Mod{N}$ for all $k=1,\ldots,2g$. 
Since the exact denominator of the vector $\mathbf{v}$ is $N$
which is $\geq3$ and $\neq4$ by the assumption, one can
take an index $j$ ($1\leq j\leq 2g$) such that
\begin{equation}\label{primden}
\textrm{the exact denominator of $v_j$ is $\geq3$ and $\neq 4$}.
\end{equation}
In particular, we have
\begin{equation}\label{Vjnot-Vj}
V_j\not\equiv -V_j\Mod{N}.
\end{equation}
Here, we may assume that
\begin{equation*}
1\leq j\leq g
\end{equation*}
because the action of $\begin{bmatrix}O_g&I_g\\-I_g&O_g\end{bmatrix}^T\in\mathrm{Sp}_{2g}(\mathbb{Z})$
    on both sides of $\Theta_\mathbf{v}(Z)^n=\Theta_{\mathbf{v}'}(Z)^n$ (if necessary) yields
\begin{equation*}
\Theta_{\left[\begin{smallmatrix}
\mathbf{v}_l\\-\mathbf{v}_u
\end{smallmatrix}\right]}(Z)^n=
\Theta_{\left[\begin{smallmatrix}
\mathbf{v}_l'\\-\mathbf{v}_u'
\end{smallmatrix}\right]}(Z)^n
\end{equation*}
by Proposition \ref{bigthetaproperty} (iii).
Let $i$ be an arbitrary index ($1\leq i\leq 2g$) such that $i\neq j$.
If $\langle v_i+v_j\rangle,\langle v_i-v_j\rangle\in\{0,1/2\}$,
then we get
$v_j\in\{0,1/4,1/2,3/4\}$, which contradicts (\ref{primden}).
So, we must have
\begin{equation*}
\langle v_i+v_j\rangle\neq0,1/2\quad\textrm{or}\quad
\langle v_i-v_j\rangle\neq0,1/2.
\end{equation*}
Now, there are four possible cases:
\begin{enumerate}
\item[(C1)] $1\leq i\leq g$ and $\langle v_i+v_j\rangle\neq0,1/2$.
\item[(C2)] $1\leq i\leq g$ and $\langle v_i-v_j\rangle\neq0,1/2$.
\item[(C3)] $g+1\leq i\leq 2g$ and $\langle v_i+v_j\rangle\neq0,1/2$.
\item[(C4)] $g+1\leq i\leq 2g$ and $\langle v_i-v_j\rangle\neq0,1/2$.
\end{enumerate}
For each $1\leq r,s\leq g$, let $E_{rs}$ be the $g\times g$ matrix
with $1$ at the entry $(r,s)$ and zeros everywhere else, and let
\begin{equation*}
E_{rs}'=\left\{
\begin{array}{ll}
E_{rs}+E_{sr}&\textrm{if $r\neq s$},\\
E_{rr}&\textrm{if $r=s$}.
\end{array}
\right.
\end{equation*}
Take
\begin{align*}
\alpha=\left\{\begin{array}{ll}
\begin{bmatrix}
I_g+E_{ij}&O_g\\
O_g&I_g-E_{ji}
\end{bmatrix}^T & \textrm{for (C1)},\vspace{0.1cm}\\
\begin{bmatrix}
I_g-E_{ij}&O_g\\
O_g&I_g+E_{ji}
\end{bmatrix}^T & \textrm{for (C2)},\vspace{0.1cm}\\
\begin{bmatrix}
I_g&O_g\\
E_{i-g\,j}'&I_g
\end{bmatrix}^T & \textrm{for (C3)},\vspace{0.1cm}\\
\begin{bmatrix}
I_g&O_g\\
-E_{i-g\,j}'&I_g
\end{bmatrix}^T & \textrm{for (C4)}.
\end{array}\right.
\end{align*}
which belongs to $\mathrm{Sp}_{2g}(\mathbb{Z})$.
Acting $\alpha$ on both sides of $\Theta_\mathbf{v}(Z)^n=\Theta_{\mathbf{v}'}(Z)^n$, we obtain
by Proposition \ref{bigthetaproperty} (iii) 
\begin{equation}\label{Thetau}
\Theta_\mathbf{u}(Z)^n=\Theta_{\mathbf{u}'}(Z)^n,
\end{equation}
where $\mathbf{u}=\begin{bmatrix}u_1\\\vdots\\u_{2g}\end{bmatrix}$,
$\mathbf{u}'=\begin{bmatrix}u_1'\\\vdots\\u_{2g}'\end{bmatrix}\in\mathcal{I}_N$ such that
\begin{align*}
(u_k,u_k')=
\left\{\begin{array}{lll}
\left\{\begin{array}{l}
(v_k,v_k')\\
(v_i+v_j,v_i'+v_j')\\
(-v_{i+g}+v_{j+g},-v_{i+g}'+v_{j+g}')
\end{array}\right.&
\begin{array}{l}
\textrm{if}~k\neq i,j+g,\\
\textrm{if}~k=i,\\
\textrm{if}~k=j+g
\end{array}
&\textrm{for (C1)},\vspace{0.1cm}\\
\left\{\begin{array}{l}
(v_k,v_k')\\
(v_i-v_j,v_i'-v_j')\\
(v_{i+g}+v_{j+g},v_{i+g}'+v_{j+g}')
\end{array}\right.&
\begin{array}{l}
\textrm{if}~k\neq i,j+g,\\
\textrm{if}~k=i,\\
\textrm{if}~k=j+g
\end{array}
&\textrm{for (C2)},\vspace{0.1cm}\\
\left\{\begin{array}{l}
(v_k,v_k')\\
(v_i+v_j,v_i'+v_j')\\
(v_{i-g}+v_{j+g},v_{i-g}'+v_{j+g}')
\end{array}\right.&
\begin{array}{l}
\textrm{if}~k\neq i,j+g\\
\textrm{if}~k=i\\
\textrm{if}~k=j+g
\end{array}&
\textrm{for (C3)},\vspace{0.1cm}\\
\left\{\begin{array}{l}
(v_k,v_k')\\
(v_i-v_j,v_i'-v_j')\\
(-v_{i-g}+v_{j+g},-v_{i-g}'+v_{j+g}')
\end{array}\right.&
\begin{array}{l}
\textrm{if}~k\neq i,j+g\\
\textrm{if}~k=i\\
\textrm{if}~k=j+g
\end{array}&
\textrm{for (C4)}.
\end{array}\right.
\end{align*}
Here, we observe by (\ref{primden}) that
\begin{equation*}
\begin{bmatrix}\langle u_k\rangle\\
\langle u_{k+g}\rangle\end{bmatrix}\not\in\{0,1/2\}^2
\quad\textrm{for all}~k=1,\ldots,g.
\end{equation*}
Thus we deduce by applying the first part of the proof to the equality (\ref{Thetau}) that
\begin{equation*}
\left\{\begin{array}{l}
V_i+V_j\equiv\pm(V_i'+V_j')\Mod{N}\quad\textrm{for (C1) and (C3)},\\
V_i-V_j\equiv\pm(V_i'-V_j')\Mod{N}\quad\textrm{for (C2) and (C4)}.
\end{array}\right.
\end{equation*}
Now, remember that
\begin{equation*}
V_i\equiv\pm V_i'\Mod{N}\quad\textrm{and}\quad
V_j\equiv\pm V_j'\Mod{N}
\end{equation*}
derived in the first part of the proof.
For simplicity, let (A) represent the cases (C1) and (C3),
and let (B) represent the cases (C2) and (C4).
We then have eight possibilities for each case of (A) and (B):
\begin{enumerate}
\item[(A1)] $V_i+V_j\equiv V_i'+V_j'\Mod{N}$, $V_i\equiv V_i'\Mod{N}$, $V_j\equiv V_j'\Mod{N}$.
\item[(A2)] $V_i+V_j\equiv V_i'+V_j'\Mod{N}$, $V_i\equiv V_i'\Mod{N}$, $V_j\equiv-V_j'\Mod{N}$.
\item[(A3)] $V_i+V_j\equiv V_i'+V_j'\Mod{N}$, $V_i\equiv-V_i'\Mod{N}$, $V_j\equiv V_j'\Mod{N}$.
\item[(A4)] $V_i+V_j\equiv V_i'+V_j'\Mod{N}$, $V_i\equiv-V_i'\Mod{N},~V_j\equiv-V_j'\Mod{N}$.
\item[(A5)] $V_i+V_j\equiv-(V_i'+V_j')\Mod{N}$, $V_i\equiv V_i'\Mod{N},~V_j\equiv V_j'\Mod{N}$.
\item[(A6)] $V_i+V_j\equiv-(V_i'+V_j')\Mod{N}$, $V_i\equiv V_i'\Mod{N},~V_j\equiv-V_j'\Mod{N}$.
\item[(A7)] $V_i+V_j\equiv-(V_i'+V_j')\Mod{N}$, $V_i\equiv-V_i'\Mod{N},~V_j\equiv V_j'\Mod{N}$.
\item[(A8)] $V_i+V_j\equiv-(V_i'+V_j')\Mod{N}$, $V_i\equiv-V_i'\Mod{N},~V_j\equiv-V_j'\Mod{N}$.
\item[(B1)] $V_i-V_j\equiv V_i'-V_j'\Mod{N}$, $V_i\equiv V_i'\Mod{N},~V_j\equiv V_j'\Mod{N}$.
\item[(B2)] $V_i-V_j\equiv V_i'-V_j'\Mod{N}$, $V_i\equiv V_i'\Mod{N},~V_j\equiv-V_j'\Mod{N}$.
\item[(B3)] $V_i-V_j\equiv V_i'-V_j'\Mod{N}$, $V_i\equiv-V_i'\Mod{N},~V_j\equiv V_j'\Mod{N}$.
\item[(B4)] $V_i-V_j\equiv V_i'-V_j'\Mod{N}$, $V_i\equiv-V_i'\Mod{N},~V_j\equiv-V_j'\Mod{N}$.
\item[(B5)] $V_i-V_j\equiv-(V_i'-V_j')\Mod{N}$, $V_i\equiv V_i'\Mod{N},~V_j\equiv V_j'\Mod{N}$.
\item[(B6)] $V_i-V_j\equiv-(V_i'-V_j')\Mod{N}$, $V_i\equiv V_i'\Mod{N},~V_j\equiv-V_j'\Mod{N}$.
\item[(B7)] $V_i-V_j\equiv-(V_i'-V_j')\Mod{N}$, $V_i\equiv-V_i'\Mod{N},~V_j\equiv V_j'\Mod{N}$.
\item[(B8)] $V_i-V_j\equiv-(V_i'-V_j')\Mod{N}$, $V_i\equiv-V_i'\Mod{N},~V_j\equiv-V_j'\Mod{N}$.
\end{enumerate}
One can then readily check that
(A2), (A7), (B2) and (B7) contradict (\ref{Vjnot-Vj}). Furthermore,
(A4) contradicts $\langle v_i+v_j\rangle\neq0,1/2$, and (B4) contradicts
$\langle v_i-v_j\rangle\neq0,1/2$. Other ten cases yield
\begin{equation*}
\left\{\begin{array}{l}
V_i\equiv V_i'\Mod{N}~\textrm{and}~V_j\equiv V_j'\Mod{N},\quad\textrm{or}\\
V_i\equiv-V_i'\Mod{N}~\textrm{and}~V_j\equiv-V_j'\Mod{N}.\end{array}\right.
\end{equation*}
Therefore, we claim by (\ref{Vjnot-Vj}) that
\begin{equation*}
\left\{\begin{array}{l}
V_k\equiv V_k'\Mod{N}~\textrm{for all}~k=1,\ldots,2g,\quad\textrm{or}\\
V_k\equiv-V_k'\Mod{N}~\textrm{for all}~k=1,\ldots,2g,
\end{array}\right.
\end{equation*}
and hence $\mathbf{v}\equiv\pm\mathbf{v}'\Mod{\mathbb{Z}^{2g}}$, as desired.
\end{proof}

\begin{lemma}\label{mainlemma2}
With the assumption \textup{(\ref{mainassumption})}, let
 $n$ be any nonzero integer, and let $\mathbf{v},\mathbf{v}'\in\mathcal{I}_N$ such that
$\begin{bmatrix}\langle v_{k_i}\rangle\\
\langle v_{k_i+g}\rangle\end{bmatrix}\in\{0,1/2\}^2$ for some $1\leq k_1,\ldots,k_m\leq g$.
If $\Theta_\mathbf{v}(Z)^n=\Theta_{\mathbf{v}'}(Z)^n$, then we have $\mathbf{v}\equiv\pm\mathbf{v}'\Mod{\mathbb{Z}^{2g}}$.
\end{lemma}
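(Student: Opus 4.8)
The plan is to deduce this from Lemma~\ref{mainlemma1} by transporting the equation $\Theta_\mathbf{v}(Z)^n=\Theta_{\mathbf{v}'}(Z)^n$ through a carefully chosen element of $\mathrm{Sp}_{2g}(\mathbb{Z})$ that moves every ``bad'' coordinate pair of $\mathbf{v}$ out of $\{0,1/2\}^2$. By Proposition~\ref{bigthetaproperty}~(i) I may first normalize $0\le v_k,v_k'<1$ for all $k$. Exactly as in the proof of Lemma~\ref{mainlemma1}, the hypotheses $N\ge3$ and $N\neq4$ in (\ref{mainassumption}) furnish an index $j$ ($1\le j\le 2g$) for which $v_j$ has exact denominator $\ge3$ and $\neq4$; in particular $\langle v_j\rangle\notin\{0,1/2\}$. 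Consequently $j$ differs from each bad index $k_1,\dots,k_m$, and the coordinate pair of $\mathbf{v}$ containing $v_j$ (the pair indexed by $j$ if $j\le g$, or by $j-g$ if $j>g$) already avoids $\{0,1/2\}^2$; note that such a good pair must exist, since otherwise every entry of $\mathbf{v}$ would lie in $\{0,1/2\}$, forcing $N\le2$.

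Next I would build $\alpha\in\mathrm{Sp}_{2g}(\mathbb{Z})$ whose transpose adds the distinguished coordinate $v_j$ to each bad coordinate $v_{k_i}$. If $1\le j\le g$, I take $\alpha^T=\left[\begin{smallmatrix}I_g+\sum_i E_{k_i j}&O_g\\O_g&I_g-\sum_i E_{j k_i}\end{smallmatrix}\right]$, which sends $v_{k_i}\mapsto v_{k_i}+v_j$ while altering, among all other entries, only the single lower coordinate $v_{j+g}$. If $g+1\le j\le 2g$, writing $j=j_0+g$, I instead take $\alpha^T=\left[\begin{smallmatrix}I_g&S\\O_g&I_g\end{smallmatrix}\right]$ with the symmetric matrix $S=\sum_i E'_{k_i j_0}$, which sends $v_{k_i}\mapsto v_{k_i}+v_{j_0+g}$ while altering only the single upper coordinate $v_{j_0}$. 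In either case one checks directly that $\alpha\in\mathrm{Sp}_{2g}(\mathbb{Z})$, so that acting by $\alpha$ on both sides of $\Theta_\mathbf{v}(Z)^n=\Theta_{\mathbf{v}'}(Z)^n$ and invoking Proposition~\ref{bigthetaproperty}~(iii) yields $\Theta_\mathbf{u}(Z)^n=\Theta_{\mathbf{u}'}(Z)^n$, where $\mathbf{u}=\alpha^T\mathbf{v}$ and $\mathbf{u}'=\alpha^T\mathbf{v}'$ again lie in $\mathcal{I}_N$.

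The final step is to verify that $\mathbf{u}$ satisfies the hypothesis of Lemma~\ref{mainlemma1}. For each bad index $k_i$ the transformed upper coordinate is $v_{k_i}+v_j$ (respectively $v_{k_i}+v_{j_0+g}$) with $v_{k_i}\in\{0,1/2\}$ and $v_j$ of denominator $\ge3$; hence $\langle u_{k_i}\rangle\notin\{0,1/2\}$ and the pair at $k_i$ lies outside $\{0,1/2\}^2$. The only other altered index ($j$ in the first construction, $j_0$ in the second) retains as its companion coordinate the untouched $v_j$, whose fractional part avoids $\{0,1/2\}$, so its pair is good as well; every remaining pair is unchanged and was good to begin with. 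Thus $\left[\begin{smallmatrix}\langle u_k\rangle\\\langle u_{k+g}\rangle\end{smallmatrix}\right]\notin\{0,1/2\}^2$ for all $k=1,\dots,g$, and Lemma~\ref{mainlemma1} applied to $\Theta_\mathbf{u}(Z)^n=\Theta_{\mathbf{u}'}(Z)^n$ gives $\mathbf{u}\equiv\pm\mathbf{u}'\Mod{\mathbb{Z}^{2g}}$. Since $\alpha^T\in\mathrm{GL}_{2g}(\mathbb{Z})$ preserves $\mathbb{Z}^{2g}$, applying $(\alpha^T)^{-1}$ recovers $\mathbf{v}\equiv\pm\mathbf{v}'\Mod{\mathbb{Z}^{2g}}$, as desired.

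I expect the main obstacle to be the design and verification of $\alpha$: it must be integral and symplectic, it must clear all bad pairs simultaneously, and --- most delicately --- it must not push any previously good pair into $\{0,1/2\}^2$. The dichotomy according to whether $v_j$ sits in the upper or lower half, resolved by an $A$-type unipotent in the first case and a symmetric $B$-type unipotent in the second, is precisely what allows a single $\alpha$ to do this; the condition $\langle v_j\rangle\notin\{0,1/2\}$ (guaranteed by $N\neq2$) is what forces $\langle v_{k_i}+v_j\rangle\notin\{0,1/2\}$, and the remaining parts of (\ref{mainassumption}) are inherited so that Lemma~\ref{mainlemma1} is applicable to the transported equation.
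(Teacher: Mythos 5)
Your proposal is correct and follows essentially the same route as the paper: choose a coordinate $v_j$ of exact denominator $\geq 3$ and $\neq 4$, apply a unipotent symplectic matrix adding $v_j$ to every bad coordinate $v_{k_i}$ at once, verify no good pair is spoiled, and invoke Lemma~\ref{mainlemma1}. The only (harmless) difference is that when $j$ lies in the lower half you use a symmetric $B$-type unipotent directly, whereas the paper first conjugates by $\left[\begin{smallmatrix}O_g&I_g\\-I_g&O_g\end{smallmatrix}\right]$ to reduce to $1\leq j\leq g$ and then uses the same $A$-type block matrix you use in your first case.
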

\begin{proof}
As in Lemma \ref{mainlemma1} we take an index $j$ ($1\leq j\leq g$) such that
the exact denominator of $v_j$ is $\geq3$ and $\neq 4$.
Then we derive
\begin{equation*}
\langle v_{k_i}+v_j\rangle\neq0,1/2\quad\textrm{for all}~i=1,\ldots,m.
\end{equation*}
Let
\begin{equation*}
\beta=
\begin{bmatrix}
I_g+\sum_{i=1}^m E_{k_ij} & O_g\\
O_g & I_g-\sum_{i=1}^m E_{jk_i}
\end{bmatrix}^T
\end{equation*}
which belongs to $\mathrm{Sp}_{2g}(\mathbb{Z})$.
Acting $\beta$ on both sides of $\Theta_\mathbf{v}(Z)^n=\Theta_{\mathbf{v}'}(Z)^n$,
we get
\begin{equation}\label{ww'}
\Theta_\mathbf{w}(Z)^n=\Theta_{\mathbf{w}'}(Z)^n,
\end{equation}
where $\mathbf{w}=\begin{bmatrix}w_1\\\vdots\\w_{2g}\end{bmatrix}$,
$\mathbf{w}'=\begin{bmatrix}w_1'\\\vdots\\w_{2g}'\end{bmatrix}\in\mathcal{I}_N$ such that
\begin{equation*}
(w_k,w_k')
=\left\{\begin{array}{ll}
(v_k,v_k') & \textrm{if}~k\neq k_1,\ldots,k_m,j+g,\\
(v_{k_i}+v_j,v_{k_i}'+v_j') & \textrm{if}~k=k_i~(i=1,\ldots,m),\\
(v_{j+g}-\sum_{i=1}^mv_{k_i+g},v_{j+g}'-\sum_{i=1}^mv_{k_i+g}') & \textrm{if}~k=j+g.
\end{array}\right.
\end{equation*}
Note that
\begin{equation*}
\begin{bmatrix}
\langle w_k\rangle\\
\langle w_{k+g}\rangle\end{bmatrix}\not\in\{0,1/2\}^2
\quad\textrm{for all}~k=1,\ldots,g.
\end{equation*}
Then we obtain by applying Lemma \ref{mainlemma1} to the equality (\ref{ww'}) that
$\mathbf{w}\equiv\pm\mathbf{w}'\Mod{\mathbb{Z}^{2g}}$,
and hence we conclude that $\mathbf{v}\equiv\pm\mathbf{v}'\Mod{\mathbb{Z}^{2g}}$.
\end{proof}

By Proposition \ref{bigthetaproperty}, Lemmas \ref{mainlemma1} and \ref{mainlemma2}
we establish the following result as mentioned in the beginning of this section. 

\begin{theorem}\label{Thetaprimitive}
Assume that $g\geq2$, $N\neq1,2,4$ and $(2^g-1)\nmid N$. Then,
the Siegel family $\{\Theta_\mathbf{v}(Z)^n\}_{\mathbf{v}\in\mathcal{I}_N}$ is primitive for every nonzero integer $n$.
\end{theorem}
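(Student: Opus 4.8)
The plan is to verify directly the three defining conditions (S1), (S2), (S3) of a primitive Siegel family for $\{\Theta_\mathbf{v}(Z)^n\}_{\mathbf{v}\in\mathcal{I}_N}$, drawing on the structural facts about $\Theta_\mathbf{v}$ collected in Proposition \ref{bigthetaproperty} together with the two order lemmas just established. The point is that the hard analytic work has already been isolated in those lemmas, so at the level of the theorem only an assembly and a case split remain.

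First I would dispose of (S1) and (S2), which require no new argument. By the first remark following Proposition \ref{bigthetaproperty}, the base family $\{\Theta_\mathbf{v}(Z)\}_{\mathbf{v}\in\mathcal{I}_N}$ already satisfies (S1) and (S2); and by the remark following Definition \ref{defSiegelfamily}, any nonzero integral power of a Siegel family is again a Siegel family. Hence $\{\Theta_\mathbf{v}(Z)^n\}_{\mathbf{v}\in\mathcal{I}_N}$ satisfies (S1) and (S2) for every nonzero integer $n$, and the entire content of the theorem reduces to establishing the primitivity condition (S3).

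Next I would treat (S3), i.e. the equivalence $\Theta_\mathbf{v}(Z)^n=\Theta_{\mathbf{v}'}(Z)^n$ if and only if $\mathbf{v}\equiv\pm\mathbf{v}'\Mod{\mathbb{Z}^{2g}}$. The implication from right to left is immediate from Proposition \ref{bigthetaproperty} (i): since $\Theta_\mathbf{v}(Z)$ depends only on $\pm\mathbf{v}\Mod{\mathbb{Z}^{2g}}$, congruent index vectors give identical functions, and a fortiori identical $n$th powers. For the converse I would split on the position of $\mathbf{v}$ relative to the $2$-torsion pattern. If $\begin{bmatrix}\langle v_k\rangle\\\langle v_{k+g}\rangle\end{bmatrix}\not\in\{0,1/2\}^2$ for every $k=1,\ldots,g$, then Lemma \ref{mainlemma1} applies directly and yields $\mathbf{v}\equiv\pm\mathbf{v}'\Mod{\mathbb{Z}^{2g}}$. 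In the remaining case some index $k_i$ satisfies $\begin{bmatrix}\langle v_{k_i}\rangle\\\langle v_{k_i+g}\rangle\end{bmatrix}\in\{0,1/2\}^2$, and Lemma \ref{mainlemma2} gives exactly the same conclusion. Since these two cases are exhaustive over all $\mathbf{v}\in\mathcal{I}_N$, condition (S3) holds, and combining with the previous paragraph proves that $\{\Theta_\mathbf{v}(Z)^n\}_{\mathbf{v}\in\mathcal{I}_N}$ is primitive.

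I do not expect any obstacle at the level of the theorem itself, since it is purely an assembly of results proved earlier. The genuine difficulty has already been absorbed into Lemmas \ref{mainlemma1} and \ref{mainlemma2}: there one restricts $Z$ to a diagonal matrix (Lemma \ref{diagproduct}) to factor $\Theta_\mathbf{v}$ into one-variable Siegel functions, compares $q$-orders via the formula $\mathrm{ord}_q\,g_\mathbf{v}=(1/2)\mathbf{B}_2(\langle r\rangle)$ of Proposition \ref{Siegelproperty} (v), and uses the numerology $n_0=2^{2g-2}-2^{g-1}$, $n_{1/2}=2^{2g-2}$ together with the hypotheses $N\neq2,4$ and $(2^g-1)\nmid N$ to force non-integrality of the unwanted solutions, thereby excluding everything except $V_k\equiv\pm V_k'\Mod{N}$ coherently in $k$. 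Thus the only care needed at the top level is to confirm that the dichotomy above covers every index vector, which it manifestly does.
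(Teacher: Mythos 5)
Your proposal is correct and matches the paper's argument exactly: the paper also proves the theorem by citing Proposition \ref{bigthetaproperty} for (S1), (S2) and the easy direction of (S3), and then splits the converse of (S3) into the two exhaustive cases handled by Lemmas \ref{mainlemma1} and \ref{mainlemma2}. Nothing further is needed.
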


\section {Explicit generators of Siegel modular function fields}

By improving Lemmas \ref{mainlemma1} and \ref{mainlemma2} in some special cases
we shall obtain our main result on generators of Siegel modular function fields 
for various congruence subgroups. 

\begin{lemma}\label{weaklemma}
Let $n$ be any nonzero integer and $\mathbf{v}\in\mathcal{I}_N$. Assume that
\begin{equation}\label{weakassumption}
g\geq 2~\textrm{ and }~N\geq 3.
\end{equation}
\begin{enumerate}
\item[\textup{(i)}] If $\Theta_\mathbf{v}(Z)^n=\Theta_{(1/N)\mathbf{f}}(Z)^n$, then we have
$\mathbf{v}\equiv\pm(1/N)\mathbf{f}\Mod{\mathbb{Z}^{2g}}$.
\item[\textup{(ii)}] If $\Theta_\mathbf{v}(Z)^n=\Theta_{(1/N)\mathbf{e}}(Z)^n$, then we have
$\mathbf{v}\equiv\pm(1/N)\mathbf{e}\Mod{\mathbb{Z}^{2g}}$.
\item[\textup{(iii)}] If $\Theta_\mathbf{v}(Z)^n=\Theta_{(1/N)\mathbf{e}_j}(Z)^n$ for $1\leq j\leq 2g$, then we have $\mathbf{v}\equiv\pm(1/N)\mathbf{e}_j\Mod{\mathbb{Z}^{2g}}$.
\end{enumerate}
\end{lemma}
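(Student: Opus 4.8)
The plan is to reduce all three assertions to the single case (i), and then to prove (i) by following the template of Lemma \ref{mainlemma1}, replacing the arithmetic hypotheses $N\neq4$ and $(2^g-1)\nmid N$ by the much stronger information that the target vector has every coordinate in $\{0,1/N\}$. First I would record two explicit symplectic reductions via Proposition \ref{bigthetaproperty}(iii). Taking $\gamma=\left[\begin{smallmatrix}I_g&I_g\\O_g&I_g\end{smallmatrix}\right]\in\mathrm{Sp}_{2g}(\mathbb{Z})$ one checks $\gamma^T(1/N)\mathbf{f}=(1/N)\mathbf{e}$, so (ii) is equivalent to (i); and taking $\delta=\left[\begin{smallmatrix}A&O_g\\O_g&(A^T)^{-1}\end{smallmatrix}\right]$ with $A=I_g+\sum_{k\neq j}E_{jk}\in\mathrm{GL}_g(\mathbb{Z})$ one gets $\delta^T(1/N)\mathbf{e}_j=(1/N)\mathbf{f}$, so (iii) for $1\le j\le g$ reduces to (i); the case $j>g$ is brought to $j\le g$ by the action of $\left[\begin{smallmatrix}O_g&I_g\\-I_g&O_g\end{smallmatrix}\right]^T$. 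As these matrices lie in $\mathrm{Sp}_{2g}(\mathbb{Z})$, the relation $\mathbf{v}\equiv\pm\mathbf{v}'\Mod{\mathbb{Z}^{2g}}$ is preserved, so it suffices to prove (i).

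For (i), every pair $\left[\begin{smallmatrix}\langle v_k\rangle\\\langle v_{k+g}\rangle\end{smallmatrix}\right]$ of $\mathbf{v}'=(1/N)\mathbf{f}$ equals $\left[\begin{smallmatrix}1/N\\0\end{smallmatrix}\right]$, which avoids $\{0,1/2\}^2$ since $N\ge3$. Hence Lemma \ref{orderlemma}, applied with $\mathbf{v}'$ in the hypothesized role, yields that $\mathbf{v}$ too avoids $\{0,1/2\}^2$ in every pair and that (\ref{orderformula}) holds for all $k$. Writing this as $\phi(\langle v_k\rangle)=\phi(\langle v_k'\rangle)$ with $\phi(x)=n_0\mathbf{B}_2(\langle1/2+x\rangle)+n_{1/2}\mathbf{B}_2(\langle x\rangle)$, I would first record the decisive elementary fact that $\phi$ is symmetric about $1/2$ and attains its strict global maximum at $x=0$ (using $\phi(0)-\phi(1/2)=(n_{1/2}-n_0)/4=2^{g-3}>0$). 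For a lower coordinate, where $v_k'=0$, this forces $\langle v_k\rangle=0$, so $\mathbf{v}_l\equiv\mathbf{0}$. For an upper coordinate, where $v_k'=1/N$, solving $\phi(\langle v_k\rangle)=\phi(1/N)$ together with the integrality $Nv_k\in\mathbb{Z}$ and the range $0\le v_k<1$ gives, exactly as in the four-step analysis of Lemma \ref{mainlemma1} but now using $V_k'\in\{0,1\}$ to confine the candidate bad value to the window $(N/2-1,\,N/2)$, that $Nv_k\equiv\pm1\Mod N$, the sole exception being the single level $N=2^g-1$, where the spurious value $Nv_k=2^{g-1}-1$ also survives.

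It remains to enforce a common sign and to discard the spurious value, and this is the part I expect to be the main obstacle. For each pair of upper indices I would act by an elementary symplectic matrix of type (C1)/(C2) of Lemma \ref{mainlemma1}, which keeps $\mathbf{v}_l\equiv\mathbf{0}$ and turns $\mathbf{v}'$ into a vector still of the extreme shape, and then read off from the induced order identity a relation $V_i\pm V_k\equiv\pm(V_i'\pm V_k')\Mod N$. When $N$ is odd—in particular at the only exceptional level $N=2^g-1$—these combinations are nondegenerate: a sign mismatch drives the combined coordinate to the value $0$, the strict maximum of $\phi$, a contradiction, and the spurious value is excluded since $2(2^{g-1}-1)\equiv-1\not\equiv\pm2\Mod{2^g-1}$ for $g\ge3$ (while for $g=2$ one has $2^{g-1}-1=1$, so nothing needs excluding).

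The genuinely delicate level is $N=4$. Here $(2^g-1)\nmid N$, so no spurious value occurs, but every $\mathbb{Z}$-linear combination of the coordinates collapses modulo $2$ onto the $2$-torsion, where the diagonal theta constants of Lemma \ref{diagproduct} vanish identically and the global sign ambiguity absorbs any single flip; thus the diagonal specialization alone cannot separate $(1/N)\mathbf{f}$ from a mixed-sign vector. To finish $N=4$ I would specialize $Z$ not to a full diagonal but to $\mathrm{diag}(Z_0,\tau_3,\dots,\tau_g)$ with $Z_0\in\mathbb{H}_2$, so that the two coordinates under comparison survive inside a genus-$2$ block, and detect the relative sign through the finer genus-$2$ theta relations, thereby reducing the leftover case to $g=2$, which is then handled directly. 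Combining the two regimes shows that all upper coordinates of $\mathbf{v}$ equal a common $\pm1/N$ with $\mathbf{v}_l\equiv\mathbf{0}$, i.e. $\mathbf{v}\equiv\pm(1/N)\mathbf{f}\Mod{\mathbb{Z}^{2g}}$, which proves (i) and hence all three assertions.
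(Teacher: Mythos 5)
Your reductions of (ii) and (iii) to (i), and your first stage of (i) (applying Lemma \ref{orderlemma} with $(1/N)\mathbf{f}$ in the non-degenerate role, then reading off that $\mathbf{v}_l\equiv\mathbf{0}$ from the strict maximum of $\phi$ at $0$ and that each upper coordinate is $\pm1/N$ up to the spurious values at $N=2^g-1$), match the paper's proof and are sound. The genuine gap is exactly where you flag it: the sign-coherence step at $N=4$. Your mechanism puts the \emph{sum} $v_i+v_j$ into an \emph{upper} slot whose partner lower slot is $\equiv 0$; for $N=4$ both $v_i+v_j$ and the target value $2/N=1/2$ lie in $\{0,1/2\}$, so both transformed vectors have a pair in $\{0,1/2\}^2$ and Lemma \ref{orderlemma} is silent in either direction. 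The proposed repair --- specializing to $\mathrm{diag}(Z_0,\tau_3,\dots,\tau_g)$ and invoking unspecified ``finer genus-$2$ theta relations,'' with the $g=2$, $N=4$ case ``handled directly'' --- is not an argument; no identity is exhibited that would detect the relative sign, and the base case is not proved. Since a mixed-sign vector such as $(1/4)(\mathbf{e}_1-\mathbf{e}_2+\mathbf{e}_3+\cdots+\mathbf{e}_g)$ must actually be excluded, this is a real hole, not a vacuous case.

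The paper avoids the problem by testing the \emph{difference} rather than the sum, and by pushing it into a \emph{lower} slot: acting by $\left[\begin{smallmatrix}I_g&O_g\\E_{1i}'-E_{1j}'&I_g\end{smallmatrix}\right]^T$ sends $v_{g+1}$ to $v_{g+1}+v_i-v_j\equiv v_i-v_j$ while leaving every upper coordinate equal to $\pm1/N\notin\{0,1/2\}$; hence every pair of the transformed vector automatically avoids $\{0,1/2\}^2$, Lemma \ref{orderlemma} applies for \emph{all} $N\ge3$ (including $N=4$), and $\phi(\langle v_i-v_j\rangle)=\phi(0)$ forces $v_i\equiv v_j$ outright. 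This simultaneously gives a single common value for all upper coordinates, after which one further action by $E_{1i}'+E_{1j}'$ disposes of the spurious level $N=2^g-1$ (your pairwise treatment of the spurious value is also only partially written out --- you handle the both-spurious case but not the mixed case spurious-versus-$\pm1/N$). I would recommend replacing your (C1)/(C2) step by this lower-unitriangular variant; it closes the $N=4$ gap and shortens the spurious-value analysis.
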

\begin{proof}
\begin{enumerate}
\item[(i)] 
By Proposition \ref{bigthetaproperty} (i) we may assume that
$0\leq v_k<1$ for all $k=1,\ldots,2g$.
We shall first show that
\begin{equation*}
v_k\equiv\left\{\begin{array}{ll}
\pm {1}/{N}\Mod{\mathbb{Z}}&\textrm{if $1\leq k\leq g$ and $N\neq 2^g-1$},\\
\pm {1}/{N}~\textrm{or}~{1}/{2}\pm{1}/{2N}\Mod{\mathbb{Z}}&\textrm{if $1\leq k\leq g$ and $N= 2^g-1$},\\
0\Mod{\mathbb{Z}} & \textrm{if}~g+1\leq k\leq 2g
\end{array}\right.
\end{equation*}
via the following four steps.
\begin{enumerate}
\item[(a)] Let $k$ be an index ($1\leq k\leq g$) such that $0\leq v_k<1/2$.
Since we are assuming $\Theta_\mathbf{v}(Z)^n=\Theta_{(1/N)\mathbf{f}}(Z)^n$, we get
by Lemma \ref{orderlemma} and the assumption $N\geq3$ 
\begin{equation*}
n_0\mathbf{B}_2(1/2+v_k)+n_{1/2}\mathbf{B}_2(v_k)
=n_0\mathbf{B}_2(1/2+1/N)+n_{1/2}\mathbf{B}_2(1/N).
\end{equation*}
We then see by (\ref{nk0}) 
\begin{equation*}
v_k=\frac{1}{N}\quad\textrm{or}\quad\frac{2^{g-1}}{2^g-1}-\frac{1}{N}.
\end{equation*}
If $\displaystyle v_k=\frac{2^{g-1}}{2^g-1}-\frac{1}{N}$, then
we obtain by the facts $Nv_k\in\mathbb{Z}$ and $v_k<1/2$ that
\begin{equation*}
(2^g-1)\,|\,N\quad\textrm{and}\quad N<2(2^g-1).
\end{equation*}
Thus we must have 
\begin{equation*}
v_k=\left\{\begin{array}{ll}
 {1}/{N}&\textrm{if $N\neq 2^g-1$},\\
{1}/{N}~~\textrm{or}~~ {1}/{2}-{1}/{2N}&\textrm{if $N= 2^g-1$}.
\end{array}\right.
\end{equation*}
\item[(b)] Let $k$ be an index ($1\leq k\leq g$) such that $1/2\leq v_k<1$.
We attain by Lemma \ref{orderlemma} and the assumption $N\geq3$ 
\begin{equation*}
n_0\mathbf{B}_2(-1/2+v_k)+n_{1/2}\mathbf{B}_2(v_k)=
n_0\mathbf{B}_2(1/2+1/N)+n_{1/2}\mathbf{B}_2(1/N).
\end{equation*}
We then derive by (\ref{nk0}) 
\begin{equation*}
v_k=1-\frac{1}{N}\quad\textrm{or}\quad\frac{2^{g-1}-1}{2^g-1}+\frac{1}{N}.
\end{equation*}
Suppose $\displaystyle v_k=\frac{2^{g-1}-1}{2^g-1}+\frac{1}{N}$.
Then we achieve by the fact $Nv_k\in\mathbb{Z}$ and $v_k\geq1/2$ that
\begin{equation*}
(2^g-1)\,|\,N\quad\textrm{and}\quad N\leq2(2^g-1).
\end{equation*}
If $N=2(2^g-1)$, we have $v_k=1/2$ and so 
$\begin{bmatrix}
\langle v_k\rangle\\
\langle v_{k+g}\rangle\end{bmatrix}=\begin{bmatrix}
1/2\\
0\end{bmatrix}$ by (c).
It contradicts Lemma \ref{orderlemma}.
Hence we should have
\begin{equation*}
v_k=\left\{\begin{array}{ll}
 1-{1}/{N}&\textrm{if $N\neq 2^g-1$},\\
 1-{1}/{N}~~\textrm{or}~~{1}/{2}+{1}/{2N}&\textrm{if $N= 2^g-1$}.
\end{array}\right.
\end{equation*}
\item[(c)] Let $k$ be an index ($g+1\leq k\leq 2g$) such that
$0\leq v_k<1/2$. 
We deduce by Lemma \ref{orderlemma} 
\begin{equation*}
n_0\mathbf{B}_2(1/2+v_k)+n_{1/2}\mathbf{B}_2(v_k)
=n_0\mathbf{B}_2(1/2)+n_{1/2}\mathbf{B}_2(0).
\end{equation*}
It then follows from (\ref{nk0}) that
\begin{equation*}
v_k=0\quad\textrm{or}\quad \frac{2^{g-1}}{2^g-1}.
\end{equation*}
Since $v_k<1/2$, we must take $v_k=0$.
\item[(d)] Let $k$ be an index ($g+1\leq k\leq 2g$) such that
$1/2\leq v_k<1$. 
We see by Lemma \ref{orderlemma} that
\begin{equation*}
n_0\mathbf{B}_2(-1/2+v_k)+n_{1/2}\mathbf{B}_2(v_k)=
n_0\mathbf{B}_2(1/2)+n_{1/2}\mathbf{B}_2(0).
\end{equation*}
We then claim by (\ref{nk0}) that
\begin{equation*}
v_k=1\quad\textrm{or}\quad\frac{2^{g-1}-1}{2^g-1},
\end{equation*}
which is impossible because $1/2\leq v_k<1$. 
Therefore this case cannot happen.
\end{enumerate}
\par
Suppose that $v_i\not\equiv v_j\Mod{\mathbb{Z}}$ for some $1\leq i<j\leq g$.
Acting the matrix
\begin{equation*}
\begin{bmatrix}
I_g&O_g\\E_{1i}'-E_{1j}'&I_g
\end{bmatrix}^T\in\mathrm{Sp}_{2g}(\mathbb{Z})
\end{equation*}
on both sides of $\Theta_\mathbf{v}(Z)^n=\Theta_{(1/N)\mathbf{f}}(Z)^n$, we deduce
by Proposition \ref{bigthetaproperty} (iii) that
\begin{equation*}
\Theta_{\mathbf{u}}(Z)^n=\Theta_{(1/N)\mathbf{f}}(Z)^n,
\end{equation*}
where $\mathbf{u}=\begin{bmatrix}u_1\\\vdots\\u_{2g}\end{bmatrix}\in\mathcal{I}_N$ with 
$u_{g+1}\equiv v_i-v_j\not\equiv 0 \Mod{\mathbb{Z}}$.
On the other hand, we see from the above claim that $u_{g+1}\equiv 0\Mod{\mathbb{Z}}$,
which gives a contradiction.
Hence we have
\begin{equation*}
\mathbf{v}\equiv\left\{
\begin{array}{ll}
\pm({1}/{N})\mathbf{f}\Mod{\mathbb{Z}^{2g}}&\textrm{if $N\neq 2^g-1$},\\
\pm({1}/{N})\mathbf{f}~\textrm{or}~({1}/{2}\pm{1}/{2N})\mathbf{f}\Mod{\mathbb{Z}^{2g}}&\textrm{if $N= 2^g-1$}.
\end{array}\right.
\end{equation*}
Now, assume that $N= 2^g-1$ and $\mathbf{v}\equiv({1}/{2}\pm 1/2N)\mathbf{f}\Mod{\mathbb{Z}^{2g}}$.
If $N=3$ then $\pm1/N\equiv 1/2\mp 1/(2N)\Mod{\mathbb{Z}}$.
So we further assume that $N\neq 3$.
Acting the matrix
\begin{equation*}
\alpha=\begin{bmatrix}
I_g&O_g\\E_{1i}'+E_{1j}'&I_g
\end{bmatrix}^T\in\mathrm{Sp}_{2g}(\mathbb{Z})
\end{equation*}
for any $1\leq i<j\leq g$ on both sides of $\Theta_\mathbf{v}(Z)^n=\Theta_{(1/N)\mathbf{f}}(Z)^n$, we obtain 
\begin{equation*}
\Theta_{\alpha^T\mathbf{v}}(Z)^n=\Theta_{(1/N)\alpha^T\mathbf{f}}(Z)^n.
\end{equation*}
Here we observe that $(\alpha^T\mathbf{v})_{g+1}\equiv \pm 1/N\Mod{\mathbb{Z}}$ and 
$((1/N)\alpha^T\mathbf{f})_{g+1}\equiv 2/N\Mod{\mathbb{Z}}$.
Meanwhile, one can show by Lemma \ref{orderlemma} that
\begin{equation*}
((1/N)\alpha^T\mathbf{f})_{g+1}\equiv \pm \frac{1}{N}~~\textrm{or}~~\frac{1}{2}\pm\frac{1}{2N}\Mod{\mathbb{Z}},
\end{equation*}
which is a contradiction.
Therefore, we conclude $\mathbf{v}\equiv\pm (1/N)\mathbf{f}\Mod{\mathbb{Z}^{2g}}$.
\item[(ii)] Acting $\begin{bmatrix}
I_g&O_g\\-I_g&I_g
\end{bmatrix}^T\in\mathrm{Sp}_{2g}(\mathbb{Z})$ on both sides of $\Theta_\mathbf{v}(Z)^n=\Theta_{(1/N)\mathbf{e}}(Z)^n$,
we get
\begin{equation*}
\Theta_{\left[\begin{smallmatrix}\mathbf{v}_u\\-\mathbf{v}_u+\mathbf{v}_l
\end{smallmatrix}\right]}(Z)^n=\Theta_{(1/N)\mathbf{f}}(Z)^n.
\end{equation*}
Thus we obtain by (i) 
\begin{equation*}
\begin{bmatrix}\mathbf{v}_u\\-\mathbf{v}_u+\mathbf{v}_l\end{bmatrix}
\equiv\pm(1/N)\mathbf{f}\Mod{\mathbb{Z}^{2g}},
\end{equation*}
from which it follows that $\mathbf{v}\equiv\pm(1/N)\mathbf{e}\Mod{\mathbb{Z}^{2g}}$.
\item[(iii)] Let
\begin{equation*}
A=\left\{\begin{array}{ll}
I_g+\sum_{1\leq i\leq g,\, i\neq j} E_{ij} & \textrm{when}~1\leq j\leq g,\\
I_g+\sum_{1\leq i\leq g,\,i\neq j-g} E_{i\,j-g} & \textrm{when}~g+1\leq j\leq 2g,
\end{array}\right.
\end{equation*}
and let
\begin{equation*}
\alpha=\left\{\begin{array}{ll}
\begin{bmatrix}
A&O_g\\O_g & (A^T)^{-1}
\end{bmatrix}^T & \textrm{if}~1\leq j\leq g,\\
\begin{bmatrix}
O_g & A\\ -(A^T)^{-1} & O_g
\end{bmatrix}^T& \textrm{if}~g+1\leq j\leq 2g
\end{array}\right.
\end{equation*}
which belongs to $\mathrm{Sp}_{2g}(\mathbb{Z})$.
Acting $\alpha$ on both sides of $\Theta_\mathbf{v}(Z)^n=
\Theta_{(1/N)\mathbf{e}_j}(Z)^n$, we claim by Proposition \ref{bigthetaproperty} (iii) that
\begin{equation*}
\Theta_{\alpha^T\mathbf{v}}(Z)^n=
\Theta_{(1/N)\alpha^T\mathbf{e}_j}(Z)^n=\Theta_{(1/N)\mathbf{f}}(Z)^n.
\end{equation*}
Thus we derive (iii) by utilizing (i).
\end{enumerate}
\end{proof}

\begin{theorem}\label{mainresult}
Assume that $g\geq2$ and $N\geq 3$. 
Then, for any nonzero integer $n$, we have
\begin{align*}
\mathcal{F}_N&=
\mathcal{F}_1\left(
\Theta_{(1/N)\mathbf{e}_1}(Z)^n,
\ldots,
\Theta_{(1/N)\mathbf{e}_{2g}}(Z)^n,
\Theta_{(1/N)\mathbf{e}}(Z)^n\right),\\
\mathcal{F}^1_N(\mathbb{Q})&=
\mathcal{F}_1\left(
\Theta_{(1/N)\mathbf{e}_1}(Z)^n,\ldots,
\Theta_{(1/N)\mathbf{e}_g}(Z)^n,
\Theta_{(1/N)\mathbf{f}}(Z)^n\right),\\
\mathcal{F}_{1,N}(\mathbb{Q})&=
\mathbb{Q}\left(f(NZ),
\Theta_{(1/N)\mathbf{e}_1}(NZ)^n,\ldots,
\Theta_{(1/N)\mathbf{e}_g}(NZ)^n,
\Theta_{(1/N)\mathbf{f}}(NZ)^n~|~f(Z)\in\mathcal{F}_1\right).
\end{align*}
\end{theorem}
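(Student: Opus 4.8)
The plan is to imitate the proofs of Propositions~\ref{generators1} and~\ref{generators2}, the only change being that the full primitivity condition (S3) --- which may fail here, since the hypothesis $N\geq3$ permits $(2^g-1)\mid N$ --- is replaced throughout by the targeted uniqueness statements of Lemma~\ref{weaklemma}. First I would record that $\{\Theta_\mathbf{v}(Z)^n\}_{\mathbf{v}\in\mathcal{I}_N}$ satisfies (S1) and (S2): this follows from the Remark after Proposition~\ref{bigthetaproperty} together with the observation that powers of a Siegel family are again Siegel families. These are the only structural properties I shall need beyond Lemma~\ref{weaklemma}.

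For the first equality, let $E$ be the field on the right-hand side, so $E\subseteq\mathcal{F}_N$ by (S1). Suppose $\alpha\in\mathrm{GSp}_{2g}(\mathbb{Z}/N\mathbb{Z})/\{\pm I_{2g}\}$ fixes $E$ elementwise. By (S2) one has $\Theta_{(1/N)\alpha^T\mathbf{e}_j}(Z)^n=\Theta_{(1/N)\mathbf{e}_j}(Z)^n$, and Lemma~\ref{weaklemma}(iii) then yields $\alpha^T\mathbf{e}_j\equiv\pm\mathbf{e}_j\Mod{N\cdot\mathbb{Z}^{2g}}$ for each $j$; hence every row of $\alpha$ is $\pm$ a standard row vector, forcing $\alpha$ to be diagonal with entries $\pm1$. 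Feeding the vector $\mathbf{e}$ into (S2) and applying Lemma~\ref{weaklemma}(ii) to $\Theta_{(1/N)\alpha^T\mathbf{e}}(Z)^n=\Theta_{(1/N)\mathbf{e}}(Z)^n$ gives $\alpha^T\mathbf{e}\equiv\pm\mathbf{e}$, which makes all these diagonal signs equal. Thus $\alpha$ is the identity of $\mathrm{Gal}(\mathcal{F}_N/\mathcal{F}_1)$, and $E=\mathcal{F}_N$ follows. This is precisely the argument of Proposition~\ref{generators1}, with Lemma~\ref{weaklemma}(ii),(iii) standing in for (S3).

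For the second equality I would repeat the proof of Proposition~\ref{generators2}(i). The inclusion $L\subseteq\mathcal{F}^1_N(\mathbb{Q})$ is formal, depending only on (S1), (S2) and the shapes of $\Gamma^1(N)$ and $G_N$, so it transfers unchanged. For the reverse inclusion, take $\alpha\in\mathrm{Sp}_{2g}(\mathbb{Z}/N\mathbb{Z})/\{\pm I_{2g}\}$ fixing $L$; applying Lemma~\ref{weaklemma}(iii) for $j=1,\ldots,g$ and Lemma~\ref{weaklemma}(i) to the member $\Theta_{(1/N)\mathbf{f}}(Z)^n$ pins $\alpha$ down to $\pm\begin{bmatrix}I_g&O_g\\ \ast&I_g\end{bmatrix}$ exactly as before, so $\alpha$ fixes $\mathcal{F}^1_N(\mathbb{Q})$ and the two fields coincide. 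The third equality is then deduced from the second through the conjugation isomorphism $f(Z)\mapsto(f\circ\gamma)(Z)=f(NZ)$ with $\gamma=\begin{bmatrix}\sqrt{N}I_g&O_g\\O_g&(1/\sqrt{N})I_g\end{bmatrix}$, precisely as in Proposition~\ref{generators2}(ii); this last step uses no primitivity whatsoever.

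The point worth emphasising --- and the only place where anything beyond citation is at stake --- is the recognition that the generation proofs never use (S3) in full, but only at the three distinguished vectors $(1/N)\mathbf{e}_j$, $(1/N)\mathbf{e}$ and $(1/N)\mathbf{f}$. This is exactly why Theorem~\ref{mainresult} can hold under the weaker hypothesis $N\geq3$, which allows $(2^g-1)\mid N$ and therefore lies outside the reach of Theorem~\ref{Thetaprimitive}: the uniqueness actually required at these three vectors is supplied by Lemma~\ref{weaklemma}, whose proof has already absorbed the delicate case analysis arising when $(2^g-1)\mid N$. I expect no genuine obstacle in the assembly itself; the care required is merely to confirm that each invocation of (S3) in the cited propositions occurs at one of these three vectors, so that Lemma~\ref{weaklemma} may be substituted without loss.
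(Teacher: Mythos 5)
Your proposal is correct and follows essentially the same route as the paper: the paper's own proof consists precisely of the observation that Propositions \ref{generators1} and \ref{generators2} only invoke (S3) at the vectors $(1/N)\mathbf{e}_j$, $(1/N)\mathbf{e}$ and $(1/N)\mathbf{f}$, so Lemma \ref{weaklemma} can be substituted for full primitivity. Your write-up merely makes that substitution explicit step by step, which matches the intended argument.
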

\begin{proof}
Let $\{f_\mathbf{v}(Z)\}_{\mathbf{v}\in\mathcal{I}_N}$ be a Siegel family. 
Note that in Propositions \ref{generators1} and
\ref{generators2} we only require the family $\{f_\mathbf{v}(Z)\}_{\mathbf{v}\in\mathcal{I}_N}$ 
to satisfy the property
\begin{equation*}
f_\mathbf{v}(Z)=f_{\mathbf{v}'}(Z)~\Longleftrightarrow~
\mathbf{v}\equiv\pm\mathbf{v}'\Mod{\mathbb{Z}^{2g}}\quad
\end{equation*}
when $\mathbf{v}'=\mathbf{e}$, $\mathbf{f}$ or $\mathbf{e}_j$ ($1\leq j\leq 2g$).
Hence the theorem follows from Lemma \ref{weaklemma}. 
\end{proof}

\begin{example}
In particular, let $g=2$. As is well known, we have
\begin{equation*}
\mathcal{F}_1=\mathbb{Q}\left(\frac{E_4(Z)E_6(Z)}{E_{10}(Z)},
\frac{E_6(Z)^2}{E_{12}(Z)},\frac{E_4(Z)^5}{E_{10}(Z)^2}\right),
\end{equation*}
where $E_{2k}(Z)$ is the Siegel Eisenstein series of weight $2k$
(\cite[Theorem 3]{Igusa62} or \cite[\S 10 Proposition 3]{Klingen}).
Thus, if $N\geq 3$, then we deduce by Theorem
\ref{mainresult} that
\begin{eqnarray*}
\mathcal{F}_N&=&\mathbb{Q}\left(\frac{E_4(Z)E_6(Z)}{E_{10}(Z)},
\frac{E_6(Z)^2}{E_{12}(Z)},\frac{E_4(Z)^5}{E_{10}(Z)^2},
\Theta_{\left[\begin{smallmatrix}
1/N\\0\\0\\0
\end{smallmatrix}\right]}(Z),
\Theta_{\left[\begin{smallmatrix}
0\\1/N\\0\\0
\end{smallmatrix}\right]}(Z),
\Theta_{\left[\begin{smallmatrix}
0\\0\\1/N\\0
\end{smallmatrix}\right]}(Z),\right.\\
&&\left.
\qquad\Theta_{\left[\begin{smallmatrix}
0\\0\\0\\1/N
\end{smallmatrix}\right]}(Z),
\Theta_{\left[\begin{smallmatrix}
1/N\\1/N\\1/N\\1/N
\end{smallmatrix}\right]}(Z)\right),\\
\mathcal{F}^1_N(\mathbb{Q})&=&\mathbb{Q}\left(\frac{E_4(Z)E_6(Z)}{E_{10}(Z)},
\frac{E_6(Z)^2}{E_{12}(Z)},\frac{E_4(Z)^5}{E_{10}(Z)^2},
\Theta_{\left[\begin{smallmatrix}
1/N\\0\\0\\0
\end{smallmatrix}\right]}(Z),
\Theta_{\left[\begin{smallmatrix}
0\\1/N\\0\\0
\end{smallmatrix}\right]}(Z),
\Theta_{\left[\begin{smallmatrix}
1/N\\1/N\\0\\0
\end{smallmatrix}\right]}(Z)\right),\\
\mathcal{F}_{1,N}(\mathbb{Q})&=&\mathbb{Q}\left(\frac{E_4(NZ)E_6(NZ)}{E_{10}(NZ)},
\frac{E_6(NZ)^2}{E_{12}(NZ)},\frac{E_4(NZ)^5}{E_{10}(NZ)^2},
\Theta_{\left[\begin{smallmatrix}
1/N\\0\\0\\0
\end{smallmatrix}\right]}(NZ),
\Theta_{\left[\begin{smallmatrix}
0\\1/N\\0\\0
\end{smallmatrix}\right]}(NZ),
\Theta_{\left[\begin{smallmatrix}
1/N\\1/N\\0\\0
\end{smallmatrix}\right]}(NZ)\right).
\end{eqnarray*}

\end{example}

\bibliographystyle{amsplain}

\begin{thebibliography}{10}

\bibitem {E-K-S} I. S. Eum, J. K. Koo and D. H. Shin, \textit{Some applications of modular units}, Proc. Edinburgh Math. Soc. (2) 59 (2016), no. 1, 91–-106.

\bibitem {Igusa62} J. Igusa, \textit{On Siegel modular forms of genus two},
Amer. J. Math. 84 (1962), 175--200.



\bibitem {Igusa66} J. Igusa, \textit{On the graded ring of
theta-constants (II)}, Amer. J. Math., 88 (1966), no. 1, 221--236.

\bibitem {J-K-S} H. Y. Jung, J. K. Koo and D. H. Shin, \textit{Primitive
and totally primitive Fricke families with applications}, to appear in Results Math., http://arxiv.org/abs/1506.06317.

\bibitem {Klingen} H. Klingen, \textit{Introductory Lectures on Siegel Modular Forms}, Cambridge Studies in Advanced Mathematics 20, Cambridge Univ. Press, Cambridge, 1990.

\bibitem {K-R-S-Y} J. K. Koo, G. Robert, D. H. Shin and D. S. Yoon, \textit{On Siegel invariants
of certain CM-fields}, submitted, http://arxiv.org/abs/1508.05602.

\bibitem {K-S-Y} J. K. Koo, D. H. Shin and D. S. Yoon, \textit{Siegel families with application to class fields}, submitted, http://arxiv.org/abs/1601.00404.

\bibitem {K-L} D. Kubert and S. Lang, \textit{Modular Units}, Grundlehren der mathematischen Wissenschaften 244, Spinger-Verlag, New York-Berlin, 1981.

\bibitem {Lang} S. Lang, \textit{Elliptic Functions}, 2nd edn, Grad. Texts in Math.
112, Spinger-Verlag, New York, 1987.

\bibitem {Shimura71} G. Shimura, \textit{Introduction to the Arithmetic Theory of
Automorphic Functions}, Iwanami Shoten and Princeton University
Press, Princeton, NJ, 1971.

\bibitem {Shimura00} G. Shimura, \textit{Arithmeticity in the theory of automorphic forms}, Mathematical Surveys and Monographs, 82. Amer. Math. Soc., Providence, RI, 2000.

\bibitem {Siegel} C. L. Siegel, \textit{Moduln Abelscher Funktionen},
Nachr. Akad. Wiss. G\"{o}ttingen Math.-Phys. Kl. II 1963 (1963), 365--427.

\bibitem {Tsuyumine} S. Tsuyumine, \textit{On the Siegel modular function field of degree three}, Compositio Math. 63 (1987), no. 1, 83--98.

\end{thebibliography}

\address{
Department of Mathematical Sciences \\
KAIST \\
Daejeon 34141\\
Republic of Korea} {jkkoo@math.kaist.ac.kr}
\address{
Department of Mathematics\\
Hankuk University of Foreign Studies\\
Yongin-si, Gyeonggi-do 17035\\
Republic of Korea} {dhshin@hufs.ac.kr}
\address{
Department of Mathematical Sciences \\
KAIST \\
Daejeon 34141\\
Republic of Korea} {math\_dsyoon@kaist.ac.kr}

\end{document}